\documentclass[12pt,a4paper,reqno]{amsart}
\usepackage{amssymb,amsfonts,amsmath,amscd,amsthm,latexsym,euscript}
\usepackage{times}

\usepackage[margin=1in]{geometry}
\usepackage[hidelinks]{hyperref}
\usepackage{tikz}
\usepackage{booktabs}
\usepackage{float}
\usepackage{pdflscape}
\usepackage{array}
\usepackage{multirow}
\usepackage[labelfont=bf]{caption}
\definecolor{up}{HTML}{378ADD}\definecolor{dn}{HTML}{D85A30}
\definecolor{upbg}{HTML}{B5D4F4}\definecolor{dnbg}{HTML}{F5C4B3}
\definecolor{upd}{HTML}{0C447C}\definecolor{dnd}{HTML}{712B13}
\newcommand{\pinK}{0}
\def\cw{3.38}\def\ch{3.05}\def\sx{0.46}\def\sy{0.33}
\def\scw{0.575}\def\srh{0.52}
\newcommand{\walkpanel}[7]{
\begin{scope}[shift={(#1*\cw,-#2*\ch)}]
\ifnum#7=1\def\op{1}\def\tpo{1}\else\def\op{0.22}\def\tpo{0.3}\fi
\ifnum\pinK>0\ifnum#7=1\draw[rounded corners=3pt,black!55,line width=0.7pt] (0.06,-0.05) rectangle (\cw-0.06,-\ch+0.08);\else\draw[rounded corners=3pt,black!14] (0.06,-0.05) rectangle (\cw-0.06,-\ch+0.08);\fi\else\draw[rounded corners=3pt,black!14] (0.06,-0.05) rectangle (\cw-0.06,-\ch+0.08);\fi
\begin{scope}[shift={(0.62,-1.55)}]
\draw[black!55,line width=0.4pt,opacity=\op] (0,0) -- (5.25*\sx,0);
\draw[black!55,line width=0.4pt,opacity=\op] (0,0) -- (0,3.3*\sy);
\foreach \t in {1,...,5} {\draw[black!55,line width=0.4pt,opacity=\op] (\t*\sx,0) -- (\t*\sx,-0.06) node[below,inner sep=1pt,font=\fontsize{5}{6}\selectfont,text=black!60,text opacity=\op] {\t};}
\foreach \h in {0,...,3} {\draw[black!55,line width=0.4pt,opacity=\op] (0,\h*\sy) -- (-0.06,\h*\sy) node[left,inner sep=1pt,font=\fontsize{5}{6}\selectfont,text=black!60,text opacity=\op] {\h};}
\ifnum\pinK>0\draw[black!50,densely dashed,line width=0.4pt,opacity=\op] (\pinK*\sx,0) -- (\pinK*\sx,3.3*\sy);\fi
\draw[#6,line width=1.3pt,line join=round,opacity=\op] (0,0) \foreach \h [count=\i] in {#4} { -- (\i*\sx,\h*\sy) };
\fill[#6,opacity=\op] (0,0) circle (0.9pt);
\foreach \h [count=\i] in {#4} {\fill[#6,opacity=\op] (\i*\sx,\h*\sy) circle (0.9pt);}
\ifnum\pinK>0\ifnum#7=1\fill[black] (\pinK*\sx,0) circle (1.9pt);\fi\fi
\end{scope}
\node[font=\footnotesize,text opacity=\tpo] at (\cw/2,-2.15) {$\sigma=#3$};
\node[font=\footnotesize,text=#6,text opacity=\tpo] at (\cw/2,-2.6) {$#5$};
\end{scope}}
\newcommand{\signrow}[3]{
\node[font=\scriptsize,anchor=east] at (0.32,-#1*\srh) {$p=#2$};
\foreach \s [count=\k] in {#3} {
\if\s+\fill[upbg] (\k*\scw-\scw/2+0.02,-#1*\srh-\srh/2+0.03) rectangle (\k*\scw+\scw/2-0.02,-#1*\srh+\srh/2-0.03);\node[font=\scriptsize,text=upd] at (\k*\scw,-#1*\srh) {$+$};
\else\fill[dnbg] (\k*\scw-\scw/2+0.02,-#1*\srh-\srh/2+0.03) rectangle (\k*\scw+\scw/2-0.02,-#1*\srh+\srh/2-0.03);\node[font=\scriptsize,text=dnd] at (\k*\scw,-#1*\srh) {$-$};\fi}
\pgfmathsetmacro{\kl}{2*#2-1}
\draw[black!60,line width=0.5pt] (\kl*\scw-\scw/2+0.02,-#1*\srh-\srh/2+0.03) rectangle (\kl*\scw+\scw/2-0.02,-#1*\srh+\srh/2-0.03);}

\tikzset{
  walkaxis/.style={->,thin,gray},
  walk/.style={very thick},
  walkdot/.style={circle,fill,inner sep=1.4pt},
  ghost/.style={very thick,densely dashed,gray!80},
  ghostdot/.style={circle,draw,gray!80,fill=white,inner sep=1.3pt},
  steplabel/.style={font=\scriptsize},
  sigmalabel/.style={font=\scriptsize,gray!90!black},
}

\theoremstyle{plain}
\newtheorem{theorem}{Theorem}[section]
\theoremstyle{definition}
\newtheorem{lemma}[theorem]{Lemma}
\newtheorem{proposition}[theorem]{Proposition}
\newtheorem{corollary}[theorem]{Corollary}
\newtheorem{conjecture}[theorem]{Conjecture}
\newtheorem{definition}
   {Definition}
\newtheorem{example}
   {Example}
\newtheorem{remark}
{Remark}

\theoremstyle{definition}

\newtheorem*{claimNo}{Claim}

\newtheorem*{convention}{Convention}

\newtheorem{notation}{Notation}

\theoremstyle{definition}
\theoremstyle{definition}
\theoremstyle{remark}

\newcommand{\btau}{\boldsymbol{\tau}}
\newcommand{\Phip}{\Phi_p}
\newcommand{\inv}{\operatorname{inv}}

\newcommand{\BBR}{\mathbb{R}}
\newcommand{\BBN}{\mathbb{N}} 
\newcommand{\BBS}{\mathbb{S}}
\newcommand{\BBZ}{\mathbb{Z}}

\newcommand{\dd}{\partial}

\DeclareMathOperator{\pt}{pt}

\DeclareMathOperator{\id}{id}
\DeclareMathOperator{\Alt}{Alt}
\DeclareMathOperator{\Wronsk}{Wronsk}

\DeclareMathOperator{\consta}{\boldsymbol{c}}

\newcommand{\by}[1]{\textit{{#1}}}
\newcommand{\jour}[1]{\textit{{#1}}}
\newcommand{\vol}[1]{\textbf{{#1}}}

\title[Even vs 
odd late\/-\/growing permutations${}
\subseteq\mathbb{S}_{2p-1}$:
their counts differ by $(-1)^{p+1}$]{The counts of parity\/-\/odd vs even late\/-\/growing permutations\\[3pt] in the group $\mathbb{S}_{2p-1}$ differ by $(-1)^p$}

\dedicatory{Proof of the claim from arXiv:2605.11137 [math.CO]
}

\author[K.\,C.\,Shah]{Kian C.\ Shah}
\thanks{\textit{Address:} Bernoulli Institute for Mathematics,
  Computer Science and Artificial Intelligence, University of Groningen,
  P.O.\ Box 407, 9700\,AK Groningen, The Netherlands.}

\author[A.\,V.\,Kiselev]{Arthemy V.\ Kiselev${}^{\text{*}}$}
\thanks{${}^{\text{*}}$\:Corresponding author.\quad
  \textit{E-mail:} 
  {\texttt{a.v.kiselev\symbol{"40}rug.nl}}}

\subjclass[2020]{
05A05, 
11Y55, 
15A15, 
secondary 
05A15, 
68R05. 
}

\keywords{Permutation group, late\/-\/growing permutations, integral lattice, 
constrained walk, 
integer sequence OEIS A147\,681, parity,
differential operator, alternating composition, Wronskian determinant}

\date{26 October 2026}

\begin{document}
\begin{abstract}
A permutation $\sigma\in\BBS_n$ of objects $\{1,\ldots,n\}$ is called \emph{late\/-\/growing} if its partial sums never exceed the respective partial sums of the `average' permutation (with the arithmetic mean $n^{-1}\cdot\sum_{i=1}^n i = (n+1)/2$ of $1,\ldots,n$ on average at each entry $\sigma(i)$): $\sum_{i=1}^k \sigma(i) \leqslant k\cdot (n+1)/2$ for all positive $k\leqslant n$.
We prove that for $n=2p-1$ odd, the set $\Phi_p\subseteq \mathbb{S}_{2p-1}$ of late\/-\/growing permutations is partitioned into parity\/-\/even vs odd elements,
$\Phi_p = \Phi_p^{+} \sqcup \Phi_p^{-}$, in an almost symmetric way:
$| \Phi_p^{+} | - | \Phi_p^{-} | = (-1)^{p+1}$. The proof is constructive:
by viewing the permutations $\sigma\in\Phi_p$ as constrained walks on the integral lattice $\mathbb{Z}$, we describe two filters that pair all but one element in $\Phi_p^{+}$ versus $\Phi_p^{-}$; the remaining walk $\sigma^*$ with monotonically growing step amplitudes determines the dominant parity subset.
\end{abstract}
\maketitle

\tableofcontents
\subsection*{Introduction}
The Wronskian determinants of $2p$ smooth functions appear naturally, up to the integer constants $\consta(p)>0$, from the alternation of weighted differential operators of strict order~$p\in\BBN_{\geqslant 1}$ on the affine real line~$\BBR^1$. 
In this way, the Wronskians generalise the binary Lie brackets of vector fields to $N$-ary Lie structures of higher arities $N=2p$; 
initiated in~\cite{DzhumaWron,DzhumaAssoc,HanlonWachs}
in the context of $L_\infty$-\/algebras (\cite{LadaStasheff,StasheffAinftyLinfty}), 
this line of study continued in~\cite{ForKac,PRG25}, resulting in the verification of Jacobi identities for the strongly homotopy Lie algebra structures given by the (in)complete generalised Wronskian determinants of differential orders${}\geqslant 1$ over the multi\/-\/dimensional base spaces~$\BBR^{d\geqslant 1}$ (see~\cite{ForKac,Yer25}).

In the recent works \cite{Method} and~\cite{cpWron,Valladolid} 
we address the problem of fast calculation of the constants $\consta(p)$, which form the new integer sequence OEIS A392714~\cite{oeisA392714}.
To speed up the accumulation of constants $\consta(p)\sim p^{2p^2}$ obtained by alternating $2p$ differential operators $w_j(x)\cdot \dd_x^p$, 
see Eq.~\eqref{EqMain} 
below, we postulate that the weights are monomials $1$,\ $x$,\ $\ldots$,\ $x^{2p-1}$. This reduces the summation with signs from the entire permutation group $\BBS_{2p}$ to its much smaller subset $\Phi_p\subsetneq \BBS_{2p}$, which we recognise as the set of \emph{late\/-\/growing} permutations (their definition is recalled in what follows, see p.~\pageref{DefLateGrow} in~\S\ref{SecLateGrow}).

In the paper~\cite{Method} immediately preceding this text we observed an interesting property of the sets~$\Phi_p\subseteq\BBS_{2p-1}$ of late\/-\/growing permutations of $2p-1$ objects. For all $p\geqslant 1$, the number $|\Phi_p|$ of elements is odd, and every such set is almost equally partitioned by its subsets of parity\/-\/even and parity\/-\/odd permutations: the difference of these cardinalities equals $|\Phi_p^{-}| - |\Phi_p^{+}| = (-1)^p$.
We now prove this claim by viewing the late\/-\/growing permutations as constrained walks on~the integral lattice~$\BBZ$. In turn, the interpretation of late\/-\/growing permutations 
as walks within $\BBZ_{\geqslant 0}$ reveals a surprising property of the constants $\consta(p)$ that count the Wronskian determinants in~\eqref{EqMain}.

\section{Preliminaries: why late\/-\/growing permutations}\label{SecLateGrow}
\noindent%
Denote by $(-)^\sigma$ the sign of a permutation~$\sigma$.\footnote{\label{FootPermutPhoto}
\textbf{Convention.}\quad We view permutations $\sigma\in\BBS_{n}$ of objects $1,\ldots,n$ as the objects' 
photograph after a permutation acts: for $1\leqslant i \leqslant n$, the value $\sigma(i)$ is the label of object found on the $i$th place. 
For example, $\sigma=(3,2,4,1)\colon (1,x,x^2,x^3) \longmapsto (x^2,x,x^3,1)$.
The sign $(-)^\sigma$ thus equals $-1$ raised to the number of inversions $\sigma(i) > \sigma(j)$ for all $1\leqslant i < j \leqslant n$.}
Let $p\in\BBN_{\geqslant 1}$ and alternate $N=2p$ differential operators $w_j(x)\cdot \dd_x^p$ of strict order~$p$ on the line~$\BBR$ with affine coordinate~$x$;
the result is again an operator of strict differential order~$p$:
\begin{multline}
\Alt\bigl(\bigl\{ w_j(x)\cdot\dd_x^p,\ 1\leqslant j\leqslant N\bigr\}\bigr) =
\sum
_{\sigma\in\BBS_N} (-)^\sigma\, w_{\sigma(1)}\dd_x^p\circ\ldots\circ w_{\sigma(N)}\dd_x^p ={}\\
{}= \consta(p)\cdot\Wronsk\bigl(w_1,\ldots,w_N\bigr)\cdot \dd_x^p,
\label{EqMain}
\end{multline}
where $\Wronsk(w_1,\ldots,w_N) = \det\bigl( \dd_x^{i-1} (w_j) \bigr)$ is the Wronskian determinant of the $N$ coefficients ($1\leqslant i,j \leqslant N$) and $\consta(p)\in\BBZ$ is a universal constant (independent of the weights $\{w_j\}$): e.g., $\consta(1)=1$ and $\consta(2)=2$ yet $\consta(18)=4.881\ldots\cdot 10^{462}$ (see~\cite{cpWron,Method,Valladolid}). 

To calculate the values $\consta(p)>0$ of the new integer sequence OEIS A392\,714 (\cite{oeisA392714}) for $p\geqslant 4$, we take the \emph{monomial} weights $w_j \mathrel{{:}{=}} x^{j-1}$; their Wronskian determinant is $W \mathrel{{:}{=}} \prod_{k=0}^{N-1} k! \in\BBN$, that is a monomial of degree zero.
Whenever the monomials $1$,\ $x$,\ $\ldots$,\ $x^{N-1}$ are chosen as the operator weights, not all permutations $\sigma\in\BBS_{N}$ contribute to the right\/-\/hand side of~\eqref{EqMain}. Indeed, each composition $\dd_x^p\circ w_{\sigma(i)}$ 
--\,preceding the rightmost derivative $\dd_x^p$, i.e.\ within the coefficient $\Wronsk\bigl(\{ w_j \}\bigr)$\,-- multiplies its argument by the weight $w_{\sigma(i)} = x^{\sigma(i)-1}$; then (if $i>1$) the $p$th order derivative $\dd_x^p$ acts from the left. The monomial degree of the term under construction is thus increased by the variable quantity $\sigma(i)-1$ and immediately decreased by the fixed step~$p$.
Yet the monomial degree must 
stay nonnegative at every moment while the operators act from right to left.
This lower bound is the filter\footnote{\label{FootRLHeightLRLateGrow}
The tracking of monomial degrees${}\geqslant 0$ from right to left in the~\eqref{EqBanSet} or~\eqref{eq:filter} is equivalent to the \emph{upper} bounds for the monomial degrees $\sigma(i)-1$ whenever we track them left to right as $i$ grows from $1$ to~$N$. The former set of lower bounds appears in the proofs in this paper; the latter set of upper bounds allows us to recognise the contributing permutations $\sigma\in\BBS_{2p-1}\subsetneq\BBS_{2p}$ as the \emph{late\/-\/growing} permutations, see Definition~\ref{DefLateGrow} on p.~\pageref{DefLateGrow}.}
that selects the permutations which contribute to the r.-h.s.\ of~\eqref{EqMain}.
Specifically, the conditions that restrict the set of actually contributing permutations are:\\[0.5pt]
\mbox{ }\quad (\textit{a})\ %
$\sigma(1)=1$, because $
\deg\Wronsk\bigl(1,x,\ldots,x^{N-1}\bigr) = 0$, yet the leftmost weight $w_{\sigma(1)}$, acting last, is never differentiated;
\\[0.5pt]
\mbox{ }\quad (\textit{b})\ %
for $1\leqslant k < N$ and immediately preceding the multiplication by the weight $w_{\sigma(k)}$, the so far accumulated monomial degree cannot be negative:
\begin{equation}\label{EqBanSet}
\sum_{i=k+1}^N \bigl(\underline{\sigma(i)-1} - p \bigr) \geqslant 0 \qquad
\Longleftrightarrow\qquad
\sum_{i=k+1}^N \sigma(i) \geqslant (N-k)\cdot(p+1).
\end{equation}

\begin{notation}
Let $p\in\BBN_{\geqslant 1}$. We denote by $\Phi_p$,
\begin{equation}
\Phi_p = \bigl\{ \sigma\in\BBS_{2p} \mid \sigma(1)=1 \text{ and inequalities~\eqref{EqBanSet} hold} \bigr\} 
\subseteq \bigl\{ \sigma\in\BBS_{2p} \mid \sigma(1)=1 \bigr\} \simeq \BBS_{2p-1} \subsetneq \BBS_{2p},\label{EqDefPhiP}
\end{equation}
the set of actually contributing permutations (with respect to our choice $w_j \mathrel{{:}{=}} x^{j-1}$ of the weights in the $N=2p$ operators to alternate in~\eqref{EqMain}).\footnote{\label{FootFeelPhiP}
It is readily seen that permutations (of the set $\{1,2,\ldots,N\}$) which are `close to' the identity mapping $\id(i)=i$: $(1,2,\ldots,N)\longmapsto (1,2,\ldots,N)$ can contribute, whereas the permutations which are close to the (almost-) reversion $(1,2,\ldots,N)\longmapsto (1,N,N-1,\ldots,2)$ make no contribution to the r.\/-\/h.s.\ of~\eqref{EqMain} if $p>1$.}
\end{notation}

\begin{table}[htb]
\centering
\caption{For each $p \leqslant 6$, we report the total number of permutations 
$N! = (2p)!$ and $(N-1)!=(2p-1)!$, 
the size of the contributing set $|\Phi_p|$, its fraction of the size of the groups
$\BBS_{2p-1}$ and $\BBS_{2p}$, and the number of parity\/-\/even and odd permutations in $\Phi_p$.
The values $\consta(p \leqslant 18)$ are found in~\cite{Method}.}\label{TabPhiP}
\renewcommand{\arraystretch}{1.25}
\begin{tabular}{c r r r r r r r r}
\hline\hline
$p$ & $|\Phi_p|$ & $(N-1)!$ & $|\Phi_p|/(N-1)!$ & $N!=(2p)!$ & $|\Phi_p|/N!$ & even & odd 
 \\
\hline
$1$ & $1$ & $1$ & $1$ & $2$ & $1/2$ & $1$ & $0$ 
\\
$2$ & $3$ & $6$ & $1/2$ & $24$ & $1/8$ & $1$ & $2$ 
\\
$3$ & $35$ & $120$ & $\approx 1/3$ & $720$ & $\approx1/20$ & $18$ & $17$ 
\\
$4$ & $1\,001$ & $5\,040$ & $\approx 1/5$ & $40\,320$ & $\approx1/40$ & $500$ & $501$ 
\\
$5$ & $53\,109$ & $362\,880$ & $\approx 1/7$ & $3\,628\,800$ & $\approx1/68$ & $26\,555$ & $26\,554$ 
\\
$6$ & $4\,605\,271$ & $39\,916\,800$ & $\approx 1/9$ & $479\,001\,600$ & $\approx1/104$ & $2\,302\,635$ & $2\,302\,636$ 
\\
\hline\hline
\end{tabular}
\end{table}

We observe from Table~\ref{TabPhiP} that as $p$ grows, the set $\Phi_p$ constitutes a decreasing (in proportion to the size of the 
permutation group) part of $\BBS_{2p-1}\subsetneq\BBS_{2p}$; here we recall that $\sigma(1)\equiv 1$ and only the objects $2,\ldots,N$ are permuted by $\sigma\in\Phi_p\subsetneq\BBS_{2p}$.
In fact, it is the \emph{preceding} number of objects, $2p-2$, and the permutation group $\BBS_{2p-2}$ of size $(2p-2)!$ which, in the limit $p\to+\infty$, captures all the information about the late\/-\/growing permutations of $2p-1$ objects $2$,\ $\ldots$,\ $2p$ (see App.~\ref{AppLateGrowLimRatios} on p.~\pageref{AppLateGrowLimRatios}).

\begin{claimNo}[\cite{Method}]
As $p\to+\infty$, the ratio $|\Phi_p| / (2p-2)! $ tends monotonically from above to~$1$.\footnote{\label{FootSingapore}
Wenlin Zhang (from the National University of Singapore) communicated to us an AI-\/generated sketch of the proof; the excess $|\Phi_p| / (2p-2)! - 1$ tends to $+0$ as slow as $O\bigl(\log(p)/p\bigr)$.
Let us emphasize that the counts of late\/-\/growing permutations for $n=2p-1$ objects (as in this paper) and for $n=2\ell$ objects behave visibly differently. Both the limits (pro ratio to $(2p-2)!$ and $(2\ell-1)!$, respectively) equal unit as $p\to+\infty$ and $\ell\to+\infty$, yet the excess over $1$ for the even indices $n=2\ell \gg 1$ is less than the excess for the neighbouring odd indices $n=2\ell\pm1$ of the integer sequence A147681~(\cite{oeisA147681}).
If the excess decay law $O\bigl(\log(p)/p\bigr)$ is common for these two subsequences of A147681$(n)$, the constants in it are different for odd \textit{vs} even indices~$n$.}
\\%
\centerline{\rule{1in}{0.7pt}}
\end{claimNo}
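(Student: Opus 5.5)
The approach is to turn the walk interpretation into a cyclic-lemma identity, so that the ratio $|\Phi_p|/(2p-2)!$ becomes an expectation of a random variable that is always at least~$1$.

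\textbf{Reduction to a walk.} Put $n=2p-1$ and, for $\sigma\in\Phi_p$, write the steps $a_i=\sigma(i)-1-p$ for $2\leqslant i\leqslant 2p$. These are the $n$ distinct integers $-(p-1),\ldots,0,\ldots,p-1$ in some order, and they sum to zero. Read from right to left as in~\eqref{EqBanSet}, condition~(\textit{b}) says that every partial sum is${}\geqslant 0$. Hence $|\Phi_p|$ counts the orderings of the fixed zero-sum multiset $A=\{-(p-1),\ldots,p-1\}$ whose walk never drops below $0$.

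\textbf{Cyclic lemma.} I would invoke the standard cycle-lemma argument. For any ordering of $A$, consider its $n$ cyclic rotations. A rotation has all partial sums${}\geqslant0$ exactly when it starts immediately after a position where the cyclic prefix sum reaches its global minimum. So the number of good rotations equals $M$, the number of positions at which that minimum is attained. Averaging over the uniform action of the cyclic group on $\BBS_n$ gives
\[
|\Phi_p| = \tfrac1n\sum_{\tau\in\BBS_n} M(\tau) = (n-1)!\,\mathbb{E}[M] = (2p-2)!\,\mathbb{E}[M].
\]
This identity explains the appearance of $(2p-2)!$. Since $M\geqslant1$, it already gives the bound from above, $|\Phi_p|/(2p-2)!\geqslant1$.

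\textbf{Limit.} It remains to show that $\mathbb{E}[M-1]\to0$. The event $M\geqslant2$ occurs exactly when some cyclic block of length $1\leqslant\ell\leqslant n-1$ sums to $0$ and both of its endpoints are global minima of the walk. I would bound this by a union over blocks:
\[
\mathbb{E}[M-1]\leqslant\sum_{\ell=1}^{n-1}\Pr\bigl(\text{a fixed block of length }\ell\text{ sums to }0\text{ and the walk is minimal at both of its ends}\bigr),
\]
keeping the factor $n$ from the choice of starting point. The block for $\ell=1$ is just the zero step placed right after the minimum; it contributes order $1/n$.

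For general $\ell$, sampling without replacement from $A$ has step variance of order $p^2$. A local limit estimate, or an exchangeability or Hoeffding-type comparison with i.i.d.\ steps, then gives $\Pr(\text{block sum}=0)\lesssim 1/(p\sqrt{\ell})$. The requirement that the endpoints be minima adds a bridge/ballot factor: the block must be a positive excursion, of probability about $1/\ell$ by the cyclic lemma again, and the complementary part must also stay above. Combining these, the sum should be of order $\sum_\ell (p\,\ell)^{-1}=O(\log p/p)$, matching the rate quoted in the footnote, and this yields the limit~$1$.

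\textbf{Main obstacle: monotonicity.} The hardest step is showing that $\mathbb{E}[M]$ decreases in $p$. Asymptotic estimates do not give this. The first thing I would try is an exact expression for $\mathbb{E}[M]$: by inclusion–exclusion over the minima, $\mathbb{E}[M]$ should be a sum over compositions of $A$ into zero-sum blocks, each weighted by ballot factors. With that in hand, I would compare consecutive values of $p$ by a coupling that inserts the two new steps $\pm p$ into a walk on $A_{p}$ to obtain a walk on $A_{p+1}$. I would then aim to show that this insertion can only destroy ties of the minimum in expectation.

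If this coupling fails, the fallback is the explicit $O(\log p/p)$ upper bound together with a matching lower bound that decreases in $p$. This would prove monotonicity for $p\geqslant p_0$. The finitely many remaining cases would then be settled from the exact values in Table~\ref{TabPhiP} and in~\cite{Method}.
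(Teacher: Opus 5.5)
The paper does not prove this Claim: it is quoted from \cite{Method}, and the footnote only mentions a communicated sketch and the rate $O(\log p/p)$. So your proposal has to stand on its own. Its first part does. The cycle-lemma identity $|\Phi_p|=(2p-2)!\,\mathbb{E}[M]$ is correct, because the $n$ rotations of an ordering of distinct steps are distinct and $M$ is rotation invariant. It explains the normalisation by $(2p-2)!$, and $M\geqslant 1$ gives the approach from above.

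\textbf{The limit.} This part is only heuristic, and its key step is circular. Condition on the element set $B$ of the block. By your own cycle lemma, the probability that the block is a nonnegative excursion is then exactly $\mathbb{E}[M_B]/\ell$. So ``about $1/\ell$'' presupposes a bound on the expected number of minima for the random zero-sum set $B$, and likewise for the complement. That is the same kind of statement you are trying to prove. The local limit bound for sampling without replacement is also only asserted, and the factors you list do not multiply out to $\sum_\ell(p\ell)^{-1}$.

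You can close this gap without any local limit theorem, by an induction over \emph{all} zero-sum sets $C$ of $r$ distinct integers. Let $\nu(C)$ be the expected number of minima of a random cyclic arrangement of $C$, so that $C$ has $(r-1)!\,\nu(C)$ nonnegative orderings. Cut a nonnegative walk at its first return to $0$. A nonnegative ordering of $B$ with no interior return to $0$ is the unique nonnegative rotation in its cyclic class, so there are at most $(|B|-1)!$ of them. This gives
\[
\nu(C)\leqslant 1+\sum_{B}\frac{(|B|-1)!\,(r-|B|-1)!}{(r-1)!}\,\nu(C\setminus B),
\]
summed over nonempty proper zero-sum subsets $B\subsetneq C$; the term $B=C$ contributes the $1$. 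A zero-sum $b$-subset is determined by any $b-1$ of its elements, so there are at most $\binom{r}{b-1}/b$ of them. The same bound holds with $b$ replaced by $r-b$, by passing to complements. Hence the weights sum to at most $8H_r/r$, where $H_r$ is the harmonic number. Since $\nu(C)\leqslant r$ trivially and $8H_r/r\to 0$, induction on $r$ bounds $\nu$ uniformly. Then $\mathbb{E}[M]=\nu(V_p)=1+O(\log p/p)$, which gives the limit.

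\textbf{Monotonicity.} This is not proved, as you acknowledge. The coupling is only a hope. Inserting $\pm p$ does not destroy ties pathwise: putting $p,-p$ adjacently right after a minimum creates a new one. So a real comparison in expectation is needed, and none is indicated.

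The fallback is logically insufficient. A decreasing upper bound $U(p)$ and a decreasing lower bound $L(p)$ do not force the sequence between them to decrease. You need $U(p+1)<L(p)$, i.e.\ the excess pinned down to within the consecutive drop.
\begin{itemize}
\item The values in Appendix~\ref{AppLateGrowLimRatios} give $p\bigl(|\Phi_p|/(2p-2)!-1\bigr)\approx 1.62$ for $7\leqslant p\leqslant 14$.
\item This suggests the excess is of order $1/p$, not $\log p/p$, and drops by roughly $1.6/p^{2}$ per step.
\item If so, no lower bound can match an $O(\log p/p)$ upper bound.
\end{itemize}
What you would actually need is an expansion $\mathbb{E}[M]=1+c/p+O(p^{-2-\delta})$ with effective constants. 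You would also need a threshold $p_0$ small enough for exact computation to cover the remaining cases. Note too that monotonicity can only start at $p=2$, since the ratio is $1$ at $p=1$ and $3/2$ at $p=2$.
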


\smallskip
\noindent%
Recalling that $1+2+\ldots+N=\sum_{i=1}^N\sigma(i) = N(N+1)/2$ and that $\sigma(1)=1$ by the above condition~(\textit{a}), from~\eqref{EqBanSet} in condition~(\textit{b}) we deduce that
\[
\sum_{i=2}^k \sigma(i) \leqslant \tfrac{1}{2}N(N+1) - 1 - (N-k)\cdot(p+1) \qquad
\text{for all $k$ such that 
$2\leqslant k \leqslant N$.}
\]
Noting that the object labelled `1' is never moved by the contributing permutations $\sigma$ from~\eqref{EqDefPhiP},
let us permute not the succeeding objects $\{ 2,\ldots, N\}$ but, by putting $\sigma(i) = 1 + \varsigma(i')$ for $i'=i-1$, the relabelled, smaller set $\{\mathsf{1},\ldots,\mathsf{N-1} \}$ using permutations $\varsigma\in\BBS_{2p-1}$. Keeping in mind that $p=N/2$, we balance
\[
k-1 + \sum_{i'=1}^{k-1} \varsigma(i') \leqslant \tfrac{1}{2}N(N+1) - 1 - \bigl( N - [k-1] - 1 \bigr) \cdot \bigl( \tfrac{1}{2}N + 1 \bigr).
\]
By simple arithmetic we conclude that
\[
\sum_{i'=1}^{k-1} \varsigma(i') \leqslant (k-1)\cdot \frac{N}{2} = (k-1)\cdot\frac{\bigl(\text{\#\,objects${}+1$}\bigr)}{2}  \qquad \text{for $\varsigma\in\BBS_{N-1}$, $1\leqslant k-1 \leqslant N-1$.}
\]
Thus, the filter in~\eqref{EqBanSet} that worked backwards, i.e.\ from right to left along permutations $\sigma$ in the set $\Phi_p$, see~\eqref{EqDefPhiP}, is equivalent to the above filter that works progressively, from left to right along the permutations~$\varsigma$. It is this second phrasing of the filter which gives the name to those permutations which pass through. 

\begin{definition}\label{DefLateGrow}
A permutation $\varsigma\in\BBS_n$ is \emph{late\/-\/growing} if its partial sums never exceed the corresponding partial sums of the `average' permutation (with the arithmetic mean of $1,\ldots,n$ at each position on average over the group~$\BBS_n$),
that is,
\[
\sum_{i=1}^\kappa \varsigma(i) \leqslant \kappa\cdot \frac{n+1}{2} \qquad \text{for all $\kappa = 1,\ldots, 
n$.}
\]
Indeed, a permutation of $\{1,\ldots,n\}$ has $n^{-1}\cdot n(n+1)/2$ on average at each of its $n$ positions.
\end{definition}

\begin{remark}\label{RemParityVsShift}
The parity $(-)^\varsigma$ of a late\/-\/growing permutation $\varsigma$ equals the parity of the respective permutation $\sigma\in\Phi_p$ that does not transpose the object `1' with any other object `2',\ $\ldots$,~`$N$'.\footnote{
Indeed, neither by shifting the labels by $+1$ nor by adding the new object `1', with its label preceding all the other labels `2',\ $\ldots$,\ `$N$' after the shift, are any pre\/-\/existing inversions destroyed or any previously non\/-\/existent inversions created.
The parity $(-)^\varsigma$ of a permutation $\varsigma\in\Phi_p\subsetneq\BBS_{2p-1}$ thus equals the parity $(-)^\sigma$ of the permutation $\sigma\in\BBS_{2p}$ in the description of the set~$\Phi_p$ by using the postulate $\sigma(1)=1$ and assembly $\sigma(i) = 1 + \varsigma(i-1)$ for $2\leqslant i \leqslant N=2p$.}
\end{remark}

In this work we prove that the set $\Phi_p\subsetneq \BBS_{2p-1}$ of late\/-\/growing permutations (of $\{\mathsf{1}$,\ $\ldots$,\ $\mathsf{2p-1}\}$) is almost equally partitioned by parity\/-\/even and parity\/-\/odd elements, the difference of their numbers interlacing as~$(-1)^{p+1}$. 
Let us emphasize that it is the realisation of the set $\Phi_p\subseteq \BBS_{2p-1}$ by using the objects $\{1,\ldots,N=2p\}$ and their permutations $\sigma\in\BBS_{2p}$ satisfying the conditions (\textit{a})--(\textit{b}), see above, 
which we use in the proof.
Our reasoning is combinatorial, yet
the alternation of $N=2p$ operators with monomial weights in Eq.~\eqref{EqMain} serves the context: by looking at the monomial degrees, we view a contributing permutation $\sigma\in\Phi_p$ as a constrained walk along $\BBZ_{\geqslant 0}$ in
the lattice~$\BBZ$; the ban for monomial degrees to pass below zero selects the permutations for~$\Phi_p$ by the set of inequalities~\eqref{EqBanSet} with $1\leqslant k < N$. 

\section{The filter $\sigma\in\Phi_p$ as a walk}\label{sec:setup}
\noindent%
By definition, put\footnote{\label{FootNotationFromGen2}
The notation agrees with~\cite{Method}: this is the running sum which the permutation generator (ver.~2 in~\cite{Method}, contrasted with ver.~1 in~\cite{cpWron,Valladolid}) tracks while it fills the positions $\sigma(N)$,\ $\sigma(N-1)$,\ $\ldots$
from right to left, 
adding $R-(p+1)$ for each new entry~$R \mathrel{{=}{:}} \sigma(i)$.}
\begin{equation}\label{eq:filter}
  T_k \mathrel{{:}{=}} \sum_{i=N-k+1}^{N}\bigl(\sigma(i)-1-p\bigr) \geqslant 0
  \qquad\text{for } k=1,\dots,N-1 .
\end{equation}
We denote the \emph{increments} by 
\[
  \tau_j \;:=\; \sigma(N+1-j)-(p+1), \qquad j=1,\dots,N-1,
\]
so that $\btau=(\tau_1,\dots,\tau_{N-1})$ is the permutation $\sigma$ read from the right with its first
entry $\sigma(1)=1$ dropped and every entry shifted down by $p+1$, and
$T_k=\tau_1+\cdots+\tau_k$. Since $\sigma(2)$,\ $\dots$,\ $\sigma(N)$ are the numbers $2$,\ $\dots$,\ $2p$
in some order, the tuple $\btau$ is an ordering of the $2p-1$ numbers
\begin{equation}\label{EqCenteredSet}
  V_p \;=\; \{-(p-1),\,\dots,\,-1,\,0,\,1,\,\dots,\,p-1\}.
\end{equation}
Conversely, every ordering of the set $V_p$ arises from exactly one permutation $\sigma$ with $\sigma(1)=1$.

\begin{definition}
The running sum $T_k$ is the \emph{height} after $k$ steps;
the ordered $(N-1)$-tuple of increments $\btau$ is \emph{legal} if all its heights are $\geqslant 0$. 
\end{definition}

In these words,
\[
  \Phip \;=\; \{\sigma\in \BBS_{2p} : \sigma(1)=1 \text{ and } \btau \text{ is legal}\},
\]
and $T_{N-1}=\sum_{v\in V_p}v=0$: a legal tuple $\btau$ is a walk that starts at height $T_0=0$, never goes below $0$, and ends at height~$T_{N-1}=0$.

\begin{convention}[on the reading direction]
In what follows, everything 
is phrased in terms of the increments $\btau$, that is, in terms of the permutation $\sigma$ \emph{read from right to left}. 
The $j$th slot of the increment tuple $\btau$ is position $N+1-j$ of $\sigma$: 
the slot $1$ is 
the last entry $\sigma(N)$, yet the slot $N-1$ is 
$\sigma(2)$. 
Whenever we say that something happens
\emph{before} or \emph{after} something else, or if we speak of the \emph{first} slot, or of
\emph{the height before a pair of slots}, we mean the order of $\btau$; in the permutation $\sigma$ this is the order from right to left. 
So ``the height before slots $2i,2i+1$'' is
$T_{2i-1}=\tau_1+\cdots+\tau_{2i-1}$, the sum of the $2i-1$ \emph{rightmost} entries of
$\sigma$ after the shift. Of course, this is also the order 
in which the operators act in Eq.~\eqref{EqMain}:  
the increment $\tau_1$ comes from the innermost weight $w_{\sigma(N)}=x^{\sigma(N)-1}$, which is differentiated first, and the running sum $T_k$ is exactly the exponent of $x$ that is left after the
$k$-th block $\partial_x^{p}$ has acted. ``Legal'' therefore means: the exponent of $x$ never becomes negative, which is the filter in~\eqref{EqBanSet}.
\end{convention}

\begin{example}\label{ex:reading}
Let $p=3$, so $N=6$, and take the permutation $\sigma=(1,2,5,3,6,4)$. Reading it from the right and subtracting $p+1=4$, we obtain:
\[
  \sigma=(\,1,\ \underbrace{2}_{\tau_5=-2},\ \underbrace{5}_{\tau_4=1},\
  \underbrace{3}_{\tau_3=-1},\ \underbrace{6}_{\tau_2=2},\ \underbrace{4}_{\tau_1=0}\,)
  \qquad\longleftarrow\ \text{read this way,}
\]
so the increments are $\btau=(0,2,-1,1,-2)$ with heights $\boldsymbol{T}=(0,2,1,2,0)$, all $\geqslant 0$, hence legal, so $\sigma\in\Phi_3$. 
In terms of the alternated differential operators of order $p=3$ with monomial coefficients, the innermost weight is $w_4=x^{3}$, and
$\partial_x^{3} (x^{3})$ leaves $x^{0}$ (height $0$);\footnote{\label{FootOnlyDegrees}
In our present study of late\/-\/growing permutations, viewed as constrained walks along the lattice~$\BBZ$, it is only the integral monomial degrees which we keep track~of. Yet in~\cite{Method} and~\cite{Valladolid} we gradually accumulate the constants $\consta(p)$ from the products of falling factorials $E^{\underline{p}}_k$ that appear from the $p$th order derivatives of monomials at each step $k=N$,\ $N-1$,\ $\ldots$,~$1$.}
multiplying by $w_6=x^{5}$ and
differentiating three times leaves $x^{2}$ (height $2$); then $w_3=x^{2}$ takes it to
$x^{4}$ and $\partial_x^{3}$ leaves $x^{1}$; then $w_5=x^{4}$ gives $x^{5}\to x^{2}$; then
$w_2=x$ gives $x^{3}\to x^{0}$. The heights $(0$,\ $2$,\ $1$,\ $2$,\ $0)$ are these exponents, and the
values $E_k = T_k + p$ introduced in~\cite{cpWron,Method,Valladolid}
are the exponents before each differentiation, $E_k = (3,5,4,5,3) = T_k+3$.

\smallskip
\noindent$\bullet$\quad 
Now take the permutation $\sigma=(1,3,6,4,2,5)$. 
Reading it from the right and subtracting $p+1=4$ from each but the leftmost entry $\sigma(1)=1$, read last, gives us the ordered increments $\btau=(1,-2,0,2,-1)$ and
heights $T_1=1$ and $T_2=-1<0$: the walk goes below zero at time $2$, so
$\sigma\notin\Phi_3$. 
In terms of the differential operators, 
the weight $w_5=x^{4}$ is differentiated to $x^{1}$,
then multiplied by $w_2=x$ to give $x^{2}$, and $\partial_x^{3}(x^{2})=0$, so this term dies.
Both walks are drawn in Figure~\ref{fig:walks}.
\end{example}

\begin{figure}[ht]
\centering
\begin{tikzpicture}[x=0.92cm,y=0.5cm]
  \begin{scope}
    \draw[walkaxis] (0,0) -- (5.6,0) node[right,font=\scriptsize] {time $k$};
    \draw[walkaxis] (0,0) -- (0,3.2) node[above,font=\scriptsize] {height $T_k$};
    \foreach \k in {1,...,5} {\draw[gray,thin] (\k,0.1) -- (\k,-0.1) node[below,font=\scriptsize] {$\k$};}
    \draw[walk] (0,0) -- (1,0) -- (2,2) -- (3,1) -- (4,2) -- (5,0);
    \foreach \pt in {(0,0),(1,0),(2,2),(3,1),(4,2),(5,0)} {\node[walkdot] at \pt {};}
    \node[steplabel,above] at (0.5,0.05) {$0$};
    \node[steplabel,above left] at (1.5,1) {$2$};
    \node[steplabel,above right] at (2.5,1.5) {$-1$};
    \node[steplabel,above left] at (3.5,1.5) {$1$};
    \node[steplabel,above right] at (4.5,1) {$-2$};
    \node[sigmalabel,anchor=east] at (-0.15,-2.2) {step $\tau_k$:};
    \foreach \k/\s in {1/0,2/2,3/-1,4/1,5/-2} {\node[sigmalabel] at (\k,-2.2) {$\s$};}
    \node[sigmalabel,anchor=east] at (-0.15,-3.2) {entry $\sigma(7-k)$:};
    \foreach \k/\s in {1/4,2/6,3/3,4/5,5/2} {\node[sigmalabel] at (\k,-3.2) {$\s$};}
    \node[font=\small] at (3.4,4.2) {$\sigma=(1,2,5,3,6,4)$: legal};
  \end{scope}
  \begin{scope}[xshift=7.2cm]
    \draw[walkaxis] (0,0) -- (5.6,0) node[right,font=\scriptsize] {time $k$};
    \draw[walkaxis] (0,-1.6) -- (0,3.2) node[above,font=\scriptsize] {height $T_k$};
    \foreach \k in {1,...,5} {\draw[gray,thin] (\k,0.1) -- (\k,-0.1) node[below right,font=\scriptsize] {$\k$};}
    \draw[walk] (0,0) -- (1,1) -- (2,-1);
    \draw[ghost] (2,-1) -- (3,-1) -- (4,1) -- (5,0);
    \foreach \pt in {(0,0),(1,1)} {\node[walkdot] at \pt {};}
    \node[walkdot,red] at (2,-1) {};
    \foreach \pt in {(3,-1),(4,1),(5,0)} {\node[ghostdot] at \pt {};}
    \node[steplabel,above left] at (0.5,0.5) {$1$};
    \node[steplabel,above right] at (1.5,0) {$-2$};
    \node[font=\scriptsize,red,anchor=north west] at (2.1,-1.15) {$T_2=-1$: rejected};
    \node[sigmalabel,anchor=east] at (-0.15,-2.2) {step $\tau_k$:};
    \foreach \k/\s in {1/1,2/-2,3/0,4/2,5/-1} {\node[sigmalabel] at (\k,-2.2) {$\s$};}
    \node[sigmalabel,anchor=east] at (-0.15,-3.2) {entry $\sigma(7-k)$:};
    \foreach \k/\s in {1/5,2/2,3/4,4/6,5/3} {\node[sigmalabel] at (\k,-3.2) {$\s$};}
    \node[font=\small] at (3.4,4.2) {$\sigma=(1,3,6,4,2,5)$: not legal};
  \end{scope}
\end{tikzpicture}
\caption{The two permutations of Example~\ref{ex:reading} viewed as walks (here $p=3$ and $N=6$). The `time' runs
from right to left in permutations~$\sigma$: the entry used at time~$k$ is $\sigma(N+1-k)=\sigma(7-k)$,
and the step taken at time $k$ is $\tau_k=\sigma(7-k)-4$. The height after $k$ steps is
the exponent of~$x$ left after the $k$th differentiation~$\dd_x^p$. 
On the right diagram, the walk goes below zero at time~$2$, so the permutation is rejected by the filter; the dashed part is never reached.}
\label{fig:walks}
\end{figure}

\begin{definition}\label{DefStaircase}
Put $m \mathrel{{:}{=}} p-1$.
The \emph{staircase} is the increments' ordering 
\[
\btau^{*}=(0,1,-1,2,-2,\dots,m,-m)
\]
in which the \emph{payments} are $x_j=j > 0$, immediately followed by 
the \emph{debts} $y_j=-j$  for all $j$ (cf.\ Proposition~\ref{prop:shape} on p.~\pageref{prop:shape} below).
These are the increments of the permutation
\[
  \sigma^{*}=(1,\,2,\,2p,\,3,\,2p-1,\,4,\,2p-2,\,\dots,\,p,\,p+2,\,p+1).
\]
\end{definition}


\begin{theorem}\label{thm:parity}
For every $p\in\BBN_{\geqslant 1} $,
\[
  \sum_{\sigma\in\Phip} (-)^{\sigma}\;=\; (-)^{\sigma^{*}}\;=\; (-1)^{p+1} .
\]
In consequence, the counts of
parity\/-\/even and parity\/-\/odd permutations in $\Phi_p$ differ 
by exactly one: the even permutations being ahead for odd $p$ and the odd permutations ahead for even~$p$.
For the same reason, the number of elements $|\Phip|$ is odd for every~$p$.
\end{theorem}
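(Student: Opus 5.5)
The plan is to build a sign\/-\/reversing involution on the set of legal increment tuples $\btau$ whose only fixed point is the staircase $\btau^{*}$ of Definition~\ref{DefStaircase}. By the Convention on the reading direction, $\btau$ is $\sigma$ read backwards, shifted, with $\sigma(1)=1$ dropped. Remark~\ref{RemParityVsShift} then shows that exchanging the entries in two adjacent slots of $\btau$ is a transposition of adjacent positions of $\sigma$, so it flips $(-)^{\sigma}$. Hence it suffices to pair legal tuples by adjacent slot swaps, leaving $\btau^{*}$ alone. A direct inversion count on $\sigma^{*}=(1,2,2p,3,2p-1,\dots,p+2,p+1)$ then gives $(-)^{\sigma^{*}}=(-1)^{p+1}$. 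Reading the inversions off the interleaving gives $\binom{p-1}{2}+\ldots$ in closed form, which I would check against Table~\ref{TabPhiP}. The parity of $|\Phip|$ follows at once, since $|\Phip|\equiv\sum_\sigma(-)^\sigma\pmod 2$.

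\textbf{First filter.} I group the slots after the first as pairs $(2i,2i+1)$, $i=1,\dots,m$. Swapping $\tau_{2i}\leftrightarrow\tau_{2i+1}$ changes only the height $T_{2i}$, so the swapped tuple is legal iff $T_{2i-1}+\min(\tau_{2i},\tau_{2i+1})\geqslant 0$. The value $T_{2i-1}$ is not altered by swaps at pairs $j\geqslant i$. So the rule ``swap at the smallest $i$ for which both orders of pair $i$ are legal'' is a well\/-\/defined involution: the smallest such index is the same before and after the swap, because pairs $j<i$ and their heights are untouched. This involution reverses sign. Its survivors are the legal $\btau$ in which every pair is \emph{forced}: the pair consists of one entry $a$ and one entry $-b$ with $b>T_{2i-1}$, and the larger entry must come first.

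\textbf{Second filter.} On the survivors I would use swaps across the odd boundaries, i.e.\ of slots $(2i-1,2i)$, including the pair $(\tau_1,\tau_2)$. I would again take the smallest index at which the swap is legal \emph{and} keeps the tuple inside the survivor set of the first filter. The goal is to show that a survivor with no such available swap must satisfy $T_{2i-1}=0$ for all $i$, and that the forced pairs must be exactly $(i,-i)$ in increasing order of $i$. The argument I have in mind runs as follows. If some $T_{2i-1}>0$, or if the amplitudes are not monotone, then one can locate a payment that can be moved one slot earlier without breaking either legality or forcedness. Once all odd heights vanish, $\tau_1=0$, each pair $(a,-b)$ is forced with $a=b$, and an exchange argument shows that the amplitudes must grow. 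This yields $\btau^{*}$.

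\textbf{Main obstacle.} The hard step is the second filter. An odd\/-\/boundary swap changes $T_{2i-1}$, which is exactly the quantity that controls forcedness of pair $i$. So a naive swap can turn a survivor into a tuple that the first filter would have paired, and the composite map would then fail to be an involution. I would first try to restrict the second filter to the \emph{first} index $i$ where the walk deviates from the staircase pattern, and verify that the swap there keeps every earlier pair forced and preserves the first deviation index. If that fails, my fallback is induction on $p$. This would mean deleting the extreme values $\pm m$ and showing that the signed sum over legal tuples in which $\pm m$ do not sit in the last two slots as $(m,-m)$ cancels. The remainder then reduces to $\Phi_{p-1}$ with the sign factor $(-1)$ coming from moving $\pm m$ to the end.
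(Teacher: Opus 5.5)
\textbf{There is a genuine gap: your second filter can never pair anything.}

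Your first filter is exactly the paper's Move~1. The condition ``both orders legal'' is equivalent to the free condition $T_{2i-1}+\tau_{2i+1}\geqslant 0$. Your survivors are therefore the rigid tuples $\btau=(0,x_1,y_1,\dots,x_m,y_m)$ of Proposition~\ref{prop:shape}, with $x_i>0>y_i$, $-y_i>H_{i-1}$ and $H_i\geqslant 0$. The parity of $|\Phip|$ and the staircase sign are also fine; the inversion count closes as $\binom{p-1}{2}+\sum_{i=1}^{p-1}(p-i)=(p-1)^2$.

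The failure is that on a rigid tuple no adjacent swap stays inside the survivor set:
\begin{itemize}
\item The swap of slots $(2i-1,2i)$ with $i\geqslant 2$ exchanges $y_{i-1}$ and $x_i$. Pair $i-1$ becomes $(x_{i-1},x_i)$, two payments, and it is free because $T_{2i-3}+x_i>0$.
\item For $i=1$ the zero lands in pair~$1$, which is then free by Lemma~\ref{LemStableFree}.
\item A swap inside a pair gives $T_{2i}=H_{i-1}+y_i<0$, which is illegal.
\end{itemize}
So no transposition of adjacent slots maps a rigid tuple to a rigid tuple. Every survivor has ``no available swap'', and your characterisation of swap\/-\/free survivors as the staircase is false.

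Concretely, for $p=3$ both $(0,2,-1,1,-2)$ and $(0,2,-2,1,-1)$ are rigid and are not the staircase. The second one even has all odd heights equal to $0$ and all pairs of the form $(a,-a)$. The ``exchange argument'' that was supposed to force the amplitudes to grow therefore has no admissible move. Restricting to the first deviation index does not help, because it still uses adjacent swaps.

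\textbf{The missing idea: drop adjacency.} Any transposition of $\sigma$ flips $(-)^{\sigma}$, and the paper's Move~2 is a non\/-\/adjacent swap.
\begin{itemize}
\item For a rigid $\btau\neq\btau^{*}$, put $k=\max\{j:x_j\neq j\}$.
\item Lemma~\ref{lem:tail} locks pairs $k+1,\dots,m$ as $(j,-j)$. Hence pairs $1,\dots,k$ carry exactly the payments and debts of sizes $1,\dots,k$, and the payment $k$ sits in some pair $i<k$.
\item Then $x_i=k$ is at least both $-y_i$ and $-y_{i+1}$. By Lemma~\ref{lem:debts}, exchanging the debts $y_i\rightleftarrows y_{i+1}$ (slots $2i+1$ and $2i+3$) keeps the tuple rigid.
\item The payments do not move, so $k$ and $i$ are unchanged. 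This is a fixed\/-\/point\/-\/free involution on the non\/-\/staircase rigid tuples.
\end{itemize}

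Your inductive fallback does not supply this. The reduction of the tuples ending in $(m,-m)$ to $\Phi_{p-1}$ with sign factor $-1$ is correct: the entry $2p$ in position~$3$ precedes $2p-3$ smaller entries. But showing that the signed sum over all other tuples vanishes is the whole content of the theorem, and you give no mechanism for it. It does follow from Moves~1 and~2, since both preserve the property of ending in $(m,-m)$.
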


The rest of the paper contains the proof of Theorem~\ref{thm:parity}, accompanied with examples which illustrate every new notion and every step of our reasoning.
The idea of the proof is that we introduce two moves (of elements in the ordered increment tuples $\btau$, hence in the permutations~$\sigma$)
that pair up permutations of opposite parities which cancel each other out, leaving only the permutation \(\sigma^*\) with the staircase increments~$\btau^*$. 
The first move pairs off almost everything; what is left has
a `rigid' shape (see Definition~\ref{DefRigid} below and Eq.~\eqref{EqFormRigid});
the second move pairs off all of the rigid permutations 
except the staircase permutation~$\sigma^{*}$. 
Either move exchanges the entries in two slots of~$\btau$.\footnote{
A move that exchanges two neighbouring entries of a legal increment ordering 
$\btau$ changes exactly one height, so it is easy to see when the result is still legal.} 
Exchanging $\tau_a$ and $\tau_b$ is the same as exchanging $\sigma(N+1-a)$ and $\sigma(N+1-b)$, a transposition
of $\sigma$, so every such move reverses the parity $(-)^{\sigma}$. 
The sign $(-)^{\sigma^*}$ of the staircase permutation is calculated separately, in~\S\ref{sec:staircase} on p.~\pageref{sec:staircase}.

\begin{example}
For $p=4$, we know $|\Phi_4|=1001$, see Table~\ref{TabPhiP} on p.~\pageref{TabPhiP}.
Move~1 pairs off $986$ permutations, 
leaving $15$ not yet processed; then, Move~2 pairs off $14$ of those, leaving the staircase permutation $\sigma^{*}=(1,2,8,3,7,4,6,5)$, which is parity\/-\/odd. 
Thus $500-501=-1$, the sign of~$\sigma^*$ at~hand.

Figure~\ref{fig:venn} charts 
the set $\Phi_4$ with late\/-\/growing permutations and their pairings.
\end{example} 

\begin{figure}[htb]
\centering
\definecolor{oeEven}{RGB}{38,92,170}
\definecolor{oeOdd}{RGB}{186,44,72}
\definecolor{oeMoveA}{RGB}{18,128,112}
\definecolor{oeMoveB}{RGB}{226,132,12}
\definecolor{oeStar}{RGB}{238,178,0}
\begin{tikzpicture}[x=1cm,y=1cm,
  evdot/.style={circle,fill=oeEven,draw=white,line width=0.4pt,inner sep=1.6pt},
  oddot/.style={circle,fill=oeOdd,draw=white,line width=0.4pt,inner sep=1.6pt},
  moveA/.style={oeMoveA,line width=0.9pt},
  moveB/.style={oeMoveB,line width=1.3pt}]
  \begin{scope}
    \clip (0,0) ellipse (6.4 and 3.75);
    \fill[oeEven!10] (-6.5,-3.8) rectangle (0,3.8);
    \fill[oeOdd!10] (0,-3.8) rectangle (6.5,3.8);
  \end{scope}
  \begin{scope}
    \clip (0,0) ellipse (3.25 and 1.85);
    \fill[oeEven!22] (-3.3,-1.9) rectangle (0,1.9);
    \fill[oeOdd!22] (0,-1.9) rectangle (3.3,1.9);
  \end{scope}
  \draw[gray!70,line width=0.6pt] (0,3.75) -- (0,-3.75);
  \draw[black!75,line width=1.1pt] (0,0) ellipse (6.4 and 3.75);
  \draw[black!60,line width=0.9pt,dashed] (0,0) ellipse (3.25 and 1.85);
  \foreach \x in {0.9,1.85,2.8,3.75}{
    \draw[moveA] (-\x,2.15) to[bend left=26] (\x,2.15);
    \draw[moveA] (-\x,-2.15) to[bend right=26] (\x,-2.15);
    \node[evdot] at (-\x,2.15) {}; \node[oddot] at (\x,2.15) {};
    \node[evdot] at (-\x,-2.15) {}; \node[oddot] at (\x,-2.15) {};
  }
  \foreach \x in {0.55,1.25,1.95,2.65}{
    \draw[moveB] (-\x,0.35) to[bend left=30] (\x,0.35);
    \node[evdot] at (-\x,0.35) {}; \node[oddot] at (\x,0.35) {};
  }
  \foreach \x in {0.55,1.25,1.95}{
    \draw[moveB] (-\x,-0.35) to[bend right=30] (\x,-0.35);
    \node[evdot] at (-\x,-0.35) {}; \node[oddot] at (\x,-0.35) {};
  }
  \draw[gray!70,densely dashed,line width=0.6pt] (-2.65,-0.35) circle (0.16);
  \fill[oeStar,draw=black!80,line width=0.6pt,shift={(2.65,-0.35)}]
    (90:0.36) -- (126:0.15) -- (162:0.36) -- (198:0.15) -- (234:0.36) -- (270:0.15)
    -- (306:0.36) -- (342:0.15) -- (18:0.36) -- (54:0.15) -- cycle;
  \foreach \sx in {-1,1}{
    \node[gray!75!black] at (\sx*4.5,2.15) {$\cdots$};
    \node[gray!75!black] at (\sx*4.5,-2.15) {$\cdots$};
  }
  \draw[black!70,line width=0.5pt] (2.95,-0.55) -- (3.75,-1.1);
  \node[font=\small,anchor=west,fill=oeOdd!10,inner sep=1.5pt] at (3.75,-1.1) {$\sigma^{*}$ (staircase)};
  \draw[gray!70,line width=0.5pt] (-2.8,-0.47) -- (-3.75,-1.1);
  \node[font=\small,anchor=east,text=gray!75!black,fill=oeEven!10,inner sep=1.5pt] at (-3.75,-1.1) {no partner};
  \node[align=center,font=\small,text=oeEven!80!black] at (-5.0,0)
    {parity\/-\/even\\[1pt] $\Phi_4^{+}$\\[1pt] $500$};
  \node[align=center,font=\small,text=oeOdd!80!black] at (5.0,0)
    {parity\/-\/odd\\[1pt] $\Phi_4^{-}$\\[1pt] $501$};
  \node[font=\large] at (-5.35,3.45) {$\Phi_4$};
  \node[font=\scriptsize,black!70,fill=white,rounded corners=1pt,inner sep=1.5pt] at (0,-1.4) {rigid};
  \node[font=\scriptsize,black!70,fill=white,rounded corners=1pt,inner sep=1.5pt] at (0,-3.75) {some free pair};
  \begin{scope}[yshift=-4.55cm]
    \draw[moveA] (-6.1,0) to[bend left=26] (-5.4,0);
    \node[font=\footnotesize,anchor=west] at (-5.25,0) {Move~1: $493$ pairs, a few drawn};
    \draw[moveB] (0.75,0) to[bend left=30] (1.45,0);
    \node[font=\footnotesize,anchor=west] at (1.6,0) {Only $7$ pairs remain for Move~2};
  \end{scope}
\end{tikzpicture}
\caption{The set $\Phi_4$ of $1\,001$ late\/-\/growing permutations, split by parity (left: even, right: odd) and by the two moves. Every arc joins an even and an odd permutation. Only the staircase permutation $\sigma^{*}$ is left unpaired.}
\label{fig:venn}
\end{figure}

We now describe the two moves of increments for a permutation~$\sigma\in\Phi_p$.

\section{Move 1: exchange inside a pair of neighbouring slots}\label{sec:move1}
\noindent%
Suppose $p>1$. Let us group the increment slots indexed by $2,3,\dots,N-1$ in a tuple $\btau$ into 
the $m \mathrel{{:}{=}} p-1>0$ pairs, themselves indexed by 
\[
  (2,3),\ (4,5),\ \dots,\ (2m,2m+1),
\]
and leave slot $1$ intact. 
Exchanging the two entries of the $i$th pair changes exactly one
height, namely $T_{2i}$, from $T_{2i-1}+\tau_{2i}$ to $T_{2i-1}+\tau_{2i+1}$; every other $T_k$ either stops before the pair $(k \leqslant 2i-1)$, so it does not involve the pair at all, or runs past it $(k \geqslant 2i+1)$, so it involves the pair only through $\tau_{2i} + \tau_{2i+1}$, which does not depend on the two summands' order.
Consequently, for a legal tuple $\btau$ the exchanged ordering is legal precisely when
\begin{equation}\label{eq:free}
  T_{2i-1}+\tau_{2i+1}\;\geqslant\;0.
\end{equation}

\begin{definition}
We say that a pair of increments is \emph{free} when condition \eqref{eq:free} holds. 
\end{definition}

A pair is free when the height before it (that is $T_{2i-1}$, the running sum over the slots to the left of the pair in $\btau$, which are the entries to the right of it in $\sigma$), plus the \emph{second} entry of the pair, is still $\geqslant 0$, so that the second entry could just as well have been taken first.

\begin{example}[$p=4$, so $N=8$]\label{ex:move1}
The permutation $\sigma=(1,2,8,3,5,4,6,7)\in\Phi_4$ yields, reading from the right,
the increments $\btau=(2,\,1,-1,\,0,-2,\,3,-3)$; 
the heights are $\boldsymbol{T}=(2$,\ $3$,\ $2$,\ $2$,\ $0$,\ $3$,\ $0)$. Let us mark the pairs,
\[
  \btau=(\,2,\ \overbrace{1,\,-1}^{\text{pair }1},\ \overbrace{0,\,-2}^{\text{pair }2},\
  \overbrace{3,\,-3}^{\text{pair }3}\,),
  \qquad
  \text{heights before the pairs: } T_1=2,\ T_3=2,\ T_5=0 .
\]
Now let us consider the pairs in their consecutive order.\\[0.5pt]
\mbox{ }\quad Pair~$1$: $T_1+\tau_3=2-1=1\geqslant 0$, free.\\[0.5pt]
\mbox{ }\quad Pair~$2$: $T_3+\tau_5=2-2=0\geqslant 0$, free.\\[0.5pt]
\mbox{ }\quad Pair~$3$: $T_5+\tau_7=0-3 < 0$, not free: the walk stands at height $0$ before it and cannot afford the step $-3$ first. 

\begin{figure}[htb]
\centering
\begin{tikzpicture}[x=0.95cm,y=0.5cm]
  \begin{scope}
    \fill[gray!12] (1,-0.4) rectangle (3,4.4);
    \draw[walkaxis] (0,0) -- (7.6,0) node[right,font=\scriptsize] {$k$};
    \draw[walkaxis] (0,0) -- (0,4.4);
    \foreach \k in {1,...,7} {\draw[gray,thin] (\k,0.1) -- (\k,-0.1) node[below,font=\scriptsize] {$\k$};}
    \draw[ghost] (1,2) -- (2,1) -- (3,2);
    \node[ghostdot] at (2,1) {};
    \draw[walk] (0,0) -- (1,2) -- (2,3) -- (3,2) -- (4,2) -- (5,0) -- (6,3) -- (7,0);
    \foreach \pt in {(0,0),(1,2),(2,3),(3,2),(4,2),(5,0),(6,3),(7,0)} {\node[walkdot] at \pt {};}
    \node[steplabel,above] at (2,3.15) {$T_2=3$};
    \node[steplabel,below,gray!80!black] at (2,0.85) {$T_2'=1$};
    \node[font=\scriptsize] at (2.5,5.2) {pair $1$: $(1,-1)\to(-1,1)$, legal};
  \end{scope}
  \begin{scope}[xshift=8.0cm]
    \fill[gray!12] (5,-3.4) rectangle (7,4.4);
    \draw[walkaxis] (0,0) -- (7.6,0) node[right,font=\scriptsize] {$k$};
    \draw[walkaxis] (0,-3.4) -- (0,4.4);
    \foreach \k in {1,...,7} {\draw[gray,thin] (\k,0.1) -- (\k,-0.1) node[below right,font=\scriptsize] {$\k$};}
    \draw[ghost] (5,0) -- (6,-3) -- (7,0);
    \node[ghostdot,draw=red] at (6,-3) {};
    \draw[walk] (0,0) -- (1,2) -- (2,3) -- (3,2) -- (4,2) -- (5,0) -- (6,3) -- (7,0);
    \foreach \pt in {(0,0),(1,2),(2,3),(3,2),(4,2),(5,0),(6,3),(7,0)} {\node[walkdot] at \pt {};}
    \node[steplabel,above] at (6,3.15) {$T_6=3$};
    \node[font=\scriptsize,red,below left] at (6,-3.1) {$T_6'=-3$};
    \node[font=\scriptsize] at (4.5,5.2) {pair $3$: $(3,-3)\to(-3,3)$, illegal};
  \end{scope}
\end{tikzpicture}
\caption{In Example~\ref{ex:move1}, the increments are $\btau=(2,1,-1,0,-2,3,-3)$.
Exchanging a pair changes one height only (dashed). 
Left diagram: pair $1$ is free, the exchanged walk is legal.
Right diagram: pair $3$ is not free; exchanging it would send the walk to $-3$ at time~$6$. (Illegal permutations are rejected at once by the generator of the set $\Phi_p$ in~\cite{Method}.)}
\label{fig:move1}
\end{figure}

Exchanging pair~$1$ gives the increments $\btau'=(2,\,-1,1,\,0,-2,\,3,-3)$ 
with heights $(2$,\ $1$,\ $2$,\ $2$,\ $0$,\ $3$,\ $0)$: only $T_2$ changed, from $3$ to $1$. In the permutation $\sigma$ this is the exchange of $\sigma(7)=6$ and $\sigma(6)=4$, giving the new permutation $\sigma'=(1,2,8,3,5,6,4,7)$. Figure~\ref{fig:move1}
shows both walks, together with the illegal result of exchanging pair~$3$.
\end{example}

Notice that the initial assumption $T_{2i-1} + \tau_{2i} \geqslant 0$ and the 
permission of its alternative by condition~\eqref{eq:free} involve the height before
the pair and the two entries of the pair, yet none of these change when any pair 
(this one or another) is exchanged. This proves a useful statement.
\begin{lemma}\label{LemRemainFree}
The set of \emph{free} pairs 
is the same for an increment tuple $\btau$ and for every ordering of increments obtained from~$\btau$ 
by exchanging neighbouring slots within any free pair(s). 
\end{lemma}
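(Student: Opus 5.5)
The plan is to compare, pair by pair, the data that decide freeness, and to check that exchanges inside free pairs leave this data untouched. Fix a legal tuple $\btau$ and a pair index $i$ with $1\leqslant i\leqslant m$. By definition~\eqref{eq:free}, whether pair $i$ is free depends only on two quantities: the height $T_{2i-1}=\tau_1+\cdots+\tau_{2i-1}$ before the pair, and the \emph{unordered} set $\{\tau_{2i},\tau_{2i+1}\}$ of its two entries. Strictly speaking, freeness also refers to which entry sits second. So I will first note that for a legal tuple the pair $i$ is free if and only if $T_{2i-1}+\min(\tau_{2i},\tau_{2i+1})\geqslant 0$. Indeed, legality already gives $T_{2i-1}+\tau_{2i}\geqslant 0$, so condition~\eqref{eq:free} is equivalent to both orders being admissible at the height $T_{2i-1}$. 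This symmetric reformulation shows that freeness of pair $i$ is unaffected by swapping pair $i$ itself.

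Next I consider the exchange inside some free pair $j$; this is a single move, and finite compositions will follow by induction. By the computation preceding~\eqref{eq:free}, the only height that changes is $T_{2j}$. The heights $T_{2i-1}$ with odd index are all preserved, because $2i-1\neq 2j$ and odd-indexed heights either stop before pair $j$ or contain both of its entries. The unordered entry sets of all pairs are preserved, since slot $1$ is untouched and each pair only has its two elements reordered. Hence, by the symmetric reformulation, every pair $i$, including $i=j$, has the same free/non-free status before and after the move. The new tuple is again legal precisely because pair $j$ was free, so the reformulation applies to it as well.

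Finally, I induct on the number of exchanges. After each exchange the tuple is legal and the set of free pairs is unchanged, so the next exchange is again inside a pair that is free for the current tuple. The result then holds for every ordering reachable by such exchanges, in any order and with repetitions allowed. The only delicate point is the asymmetry of~\eqref{eq:free}, which seems to distinguish the second entry. It is resolved by keeping legality as a standing hypothesis, which converts the condition into the symmetric $\min$-form.
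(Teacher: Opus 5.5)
Your proposal is correct and is essentially the paper's argument: the paper observes that legality at the pair, $T_{2i-1}+\tau_{2i}\geqslant 0$, together with \eqref{eq:free} involves only the height before the pair and the pair's two entries, and no exchange inside any free pair changes these. Your symmetric $\min$-reformulation, the check that exchanges preserve the odd-indexed heights and legality, and the induction on the number of exchanges spell out that one-line observation in more detail.
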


\begin{example}
Consider Example~\ref{ex:move1} again. After exchanging pair~$1$, the free pairs in the new increment tuple $\btau'$ are again pairs $1$ and $2$: the height before
pair~$2$ is still $T_3=2$ and its entries are still $0,-2$. Likewise, the height before pair~$3$ is still~$T_5=0$.
\end{example}

\begin{definition}
On a legal ordering $\btau$ having at least one free pair, we define Move-1 to be the exchange within the earliest\/-\/occurring, first\/-\/met free pair. 
\end{definition}

By Lemma~\ref{LemRemainFree}, the result of Move-1 is legal and has the same free pairs as its argument~$\btau$.

\begin{remark}\label{RemDisjointPairs1}
The pairs of permutations, produced within the set $\Phi_p$ by Move~1, are disjoint: 
if two such pairs intersect, they coincide. Indeed, the double iteration of Move~1 is an identity mapping whenever defined.
\end{remark}

\begin{proposition}
Move-1 is a parity\/-\/reversing involution without fixed points 
on the (sub)set of late\/-\/growing permutations whose
legal increments have 
at least one free pair.

In consequence, the orderings $\btau$ on which Move-1 acts 
cancel out of the sum $\sum_{\Phip}(-)^{\sigma}$. 
\end{proposition}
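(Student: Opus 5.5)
Let $F\subseteq\Phi_p$ denote the set of late\/-\/growing permutations whose legal increment tuples $\btau$ have at least one free pair, and let $M\colon F\to\Phi_p$ send $\sigma$ to the permutation whose increments are obtained from $\btau$ by exchanging the two entries of the earliest free pair. The plan is to check, in this order, that $M$ is well defined with values in $F$, that $M\circ M=\id_F$, that $M$ has no fixed points, and that $M$ reverses parity; the cancellation statement is then a formal consequence.

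\emph{Well\/-\/definedness.} Let $i$ be the index of the earliest free pair of $\btau$. By the computation preceding~\eqref{eq:free}, exchanging $\tau_{2i}$ and $\tau_{2i+1}$ changes only the height $T_{2i}$, whose new value is $T_{2i-1}+\tau_{2i+1}$. This value is $\geqslant 0$ precisely because the pair is free, so the new tuple $\btau'$ is legal and its permutation $M(\sigma)$ lies in $\Phi_p$. By Lemma~\ref{LemRemainFree}, $\btau'$ has exactly the same set of free pairs as $\btau$. In particular $\btau'$ has a free pair, so $M(\sigma)\in F$, and its earliest free pair is again pair~$i$.

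\emph{Involution, no fixed points, parity.} Since the earliest free pair of $\btau'$ is pair~$i$, applying $M$ again swaps the same two slots back, giving $M(M(\sigma))=\sigma$ (cf.\ Remark~\ref{RemDisjointPairs1}). For the absence of fixed points, note that the two entries $\tau_{2i}\neq\tau_{2i+1}$ are distinct elements of $V_p$, since $\btau$ is an ordering of the set~\eqref{EqCenteredSet}. Hence $\btau'\neq\btau$, and $M(\sigma)\neq\sigma$ because orderings of $V_p$ correspond bijectively to permutations $\sigma$ with $\sigma(1)=1$. Finally, swapping slots $2i$ and $2i+1$ of $\btau$ amounts to composing $\sigma$ with the transposition of positions $N+1-2i$ and $N-2i$. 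A transposition reverses the sign, so $(-)^{M(\sigma)}=-(-)^{\sigma}$.

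\emph{Cancellation.} $F$ is thus partitioned into two\/-\/element orbits $\{\sigma,M(\sigma)\}$ of opposite signs. Therefore
\[
\sum_{\sigma\in F}(-)^{\sigma}=\tfrac12\sum_{\sigma\in F}\bigl((-)^{\sigma}+(-)^{M(\sigma)}\bigr)=0,
\]
and $\sum_{\Phi_p}(-)^{\sigma}=\sum_{\Phi_p\setminus F}(-)^{\sigma}$.

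The only step needing care is the invariance of the \emph{earliest} free pair under $M$, since that is what makes $M$ self\/-\/inverse. It follows directly from Lemma~\ref{LemRemainFree}: being free is decided by $T_{2j-1}$, $\tau_{2j}$, $\tau_{2j+1}$ and the set $\{\tau_{2j},\tau_{2j+1}\}$, and none of these are changed by swapping pair~$i$. For $j<i$ the data are untouched, and for $j>i$ the heights $T_{2j-1}$ depend only on the sum $\tau_{2i}+\tau_{2i+1}$.
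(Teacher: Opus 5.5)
Your proof is correct and is essentially the paper's own argument: legality of the swapped tuple follows from~\eqref{eq:free}, Lemma~\ref{LemRemainFree} keeps the earliest free pair in place so that Move-1 squares to the identity (Remark~\ref{RemDisjointPairs1}), and swapping two slots of $\btau$ is a transposition of~$\sigma$, hence reverses parity. (One small wording point: for $j=i$ the individual entries $\tau_{2i},\tau_{2i+1}$ do change, so what you actually use is that for a legal tuple pair~$i$ is free iff $T_{2i-1}+\min\{\tau_{2i},\tau_{2i+1}\}\geqslant 0$, which is symmetric in the pair; also, the absence of fixed points already follows from parity reversal.)
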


\begin{example}
In Example~\ref{ex:move1}, the permutation $\sigma=(1,2,8,3,5,\mathbf{4},\mathbf{6},7)$ (parity\/-\/even, $6$ inversions) is paired
with the permutation $\sigma'=(1,2,8,3,5,\mathbf{6},\mathbf{4},7)$ (parity\/-\/odd, $7$ inversions), and the two cancel against each other in the sum of their parities.
\end{example}

\begin{definition}\label{DefRigid}
A legal ordering of increments $\btau$ with no free pair is called \emph{rigid}. 

We claim that 
a rigid ordering waits (the step $0$), then alternates a \emph{payment} $x_i>0$
and a \emph{debt} $y_i<0$, 
and every debt exceeds the height at which its pair started.\footnote{
Let us keep in mind 
that the alternating shape alone is not enough for an increments' ordering $\btau$ to be rigid. 

\textbf{Counterexample.}\quad 
For $p=4$, consider the ordering $\btau=(0,\,3,-2,\,1,-1,\,2,-3)$ that stems from the permutation $\sigma=(1,2,7,4,6,3,8,5)$. 
This ordering of increments alternates the payments
and debts, and it is legal (with the heights $0,3,1,2,1,3,0$), but pair~$2$ is free: it starts at height
$H_1=1$ and its debt is only $-1$, so $T_3+\tau_5=1-1=0\geqslant 0$: the debt could be taken first. Move~1 pairs this ordering $\btau$ with $(0,\,3,-2,\,-1,1,\,2,-3)$.}
\end{definition}

\begin{corollary}
\begin{equation}\label{eq:rigid-sum}
  \sum_{\sigma\in\Phip}(-)^{\sigma}\;=\;\sum_{\btau\ \mathrm{rigid}}(-)^{\sigma}.
\end{equation}
\end{corollary}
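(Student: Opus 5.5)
The plan is to derive the Corollary directly from the preceding Proposition, by splitting the signed sum over $\Phip$ according to whether the legal ordering $\btau$ has a free pair. Every $\sigma\in\Phip$ corresponds bijectively to a legal ordering $\btau$ of $V_p$. Write $\Phip=A\sqcup R$, where $A$ consists of those $\sigma$ whose $\btau$ has at least one free pair and $R$ consists of those whose $\btau$ has none. By Definition~\ref{DefRigid}, $R$ is exactly the set of rigid orderings. Then $\sum_{\Phip}(-)^\sigma=\sum_{A}(-)^\sigma+\sum_{R}(-)^\sigma$, and it remains to show that the sum over $A$ vanishes.

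For the sum over $A$, I will check three facts about Move-1.
\begin{itemize}
\item \emph{It maps $A$ to itself.} If $\btau$ has a free pair, exchanging the entries of the earliest free pair gives a legal ordering, because condition~\eqref{eq:free} is exactly the legality condition for the single changed height $T_{2i}$. By Lemma~\ref{LemRemainFree}, the new ordering has the same set of free pairs, so it again lies in $A$.
\item \emph{It is an involution.} The earliest free pair of the image is the same pair $i$, so applying Move-1 twice restores $\btau$ (Remark~\ref{RemDisjointPairs1}).
\item \emph{It reverses parity and has no fixed points.} Exchanging $\tau_{2i}$ and $\tau_{2i+1}$ is the transposition of $\sigma(N-2i)$ and $\sigma(N+1-2i)$, which flips $(-)^\sigma$. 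These two entries are distinct elements of $V_p$, so the exchange genuinely changes $\btau$.
\end{itemize}
Hence $A$ splits into disjoint two-element orbits $\{\sigma,\sigma'\}$ with $(-)^{\sigma}+(-)^{\sigma'}=0$, so $\sum_A(-)^\sigma=0$, which gives~\eqref{eq:rigid-sum}.

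The only point needing care is the degenerate case $p=1$, where no pairs exist because $m=0$. Then $\Phi_1$ has the single ordering $(0)$, which is vacuously rigid, and the identity holds trivially. Otherwise the argument is routine; the main thing to verify is that Lemma~\ref{LemRemainFree} indeed ensures that the \emph{earliest} free pair is preserved, since this is what makes Move-1 an involution rather than merely a map.
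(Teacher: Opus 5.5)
Your proposal is correct and follows the paper's own route. The paper likewise splits $\Phi_p$ according to whether $\btau$ has a free pair, and uses Move-1 (the exchange in the earliest free pair, legal by \eqref{eq:free} and preserving the set of free pairs by Lemma~\ref{LemRemainFree}) as a fixed-point-free, parity-reversing involution on the non-rigid part, so that only the rigid orderings survive in the sum. Your explicit treatment of the case $p=1$ and of the translation of slots to positions, $\sigma(N-2i)\leftrightarrow\sigma(N+1-2i)$, makes precise what the paper leaves implicit.
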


\begin{lemma}\label{LemStableFree}
If the zero increment is part of a pair, $\tau_{k>1}=0$ occurring later than the first increment $\tau_1$ which is not part of any pair in a legal ordering 
$\btau = \bigl(\tau_1$,\ $(\tau_2,\tau_3)$,\ $\ldots$,\ $(\tau_{2m},\tau_{2m+1})\bigr)$, then this pair containing $\tau_k=0$ is free.
\end{lemma}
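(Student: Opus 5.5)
The plan is to check the freeness condition~\eqref{eq:free} directly, splitting according to where the zero sits inside its pair. Let the zero lie in the $i$th pair $(\tau_{2i},\tau_{2i+1})$ with $1\leqslant i\leqslant m$. Condition~\eqref{eq:free} asks for $T_{2i-1}+\tau_{2i+1}\geqslant 0$. The only data we may use are the legality of $\btau$, namely $T_k\geqslant 0$ for all $k$, and the fact that Lemma~\ref{LemRemainFree} and the definition of a free pair involve only $T_{2i-1}$ and the two entries of the pair.

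\emph{Case 1: the zero is the second entry,} $\tau_{2i+1}=0$. Then condition~\eqref{eq:free} reads $T_{2i-1}+0=T_{2i-1}\geqslant 0$. This holds because $2i-1\geqslant 1$ and $\btau$ is legal; when $i=1$ it is $T_1=\tau_1\geqslant 0$. So the pair is free.

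\emph{Case 2: the zero is the first entry,} $\tau_{2i}=0$. Then $T_{2i}=T_{2i-1}$. Exchanging the two entries puts $\tau_{2i+1}$ first, and the new height is $T_{2i-1}+\tau_{2i+1}=T_{2i}+\tau_{2i+1}=T_{2i+1}$. This is $\geqslant 0$ by legality, since $2i+1\leqslant 2m+1=N-1$ is one of the heights controlled by~\eqref{eq:filter}. Hence condition~\eqref{eq:free} holds again.

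The only point needing care is the index bookkeeping. First, $T_{2i+1}$ with $2i+1=N-1$ is still a legal height; it equals $0$. Second, in Case~1 with $i=1$, $T_1\geqslant 0$ is one of the legality inequalities. There is no real obstacle beyond this, since the argument is a two-line verification in each case.

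As a consequence to record for later use, in a \emph{rigid} ordering the zero increment cannot sit in any pair. It must therefore occupy slot~$1$, which is the ``waits (the step $0$)'' part of the shape claimed in Definition~\ref{DefRigid}.
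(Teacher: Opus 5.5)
Your proof is correct and follows essentially the same route as the paper: after the swap, the new intermediate height is either the height $T_{2i-1}$ before the pair (when the zero is the second entry) or the height $T_{2i+1}$ after the pair (when the zero is the first entry), and both are nonnegative by legality. The paper makes the same observation in a single sentence instead of splitting it into your two explicit cases.
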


\begin{proof}
Indeed, the height $T$ immediately preceding the pair --\,containing $\tau_k=0$ and the other increment $\tau_\ell$ with $\ell=k-1$ or $\ell=k+1$\,-- is such that the ordered tuple $\btau$ is legal: $T\geqslant 0$ and $T+\tau_k+\tau_\ell\geqslant0$. Yet the height after this entire pair, $T+0+\tau_\ell = T+\tau_\ell+0 \geqslant 0$, also equals the height in the (old or new) ordered increments' tuple such that the other increment $\tau_\ell\neq 0$ precedes $\tau_k=0$ (if it went last) or follows $\tau_k=0$ (if $\tau_\ell$ went earlier).
In brief, the swap of zero step and any other legal step does not make the increments' tuple illegal, hence the pair with $\tau_k$ is free.
\end{proof}

Consequently, all the permutations $\sigma\in\Phi_p$ in which the zero step is not the first, $\tau_1 \neq 0 = \tau_{k>1}$, disappear from the reduced (to only `rigid' increment tuples) summation in the r.\/-\/h.\,s. of~\eqref{eq:rigid-sum}.

\begin{proposition}[the shape of a rigid ordering]\label{prop:shape}
A rigid ordering of increments has the 
shape (recall $m=p-1$):
\begin{equation}\label{EqFormRigid}
  \btau\;=\;(0,\ x_1,\,y_1,\ x_2,\,y_2,\ \dots,\ x_m,\,y_m),
\end{equation}
where $x_1,\dots,x_m$ is an arrangement of $1,\dots,m$ and $-y_1,\dots,-y_m$ 
an arrangement of~$1$,\ $\dots$,\ $m$. 

Denote by 
$H_i \mathrel{{:}{=}} T_{2i+1}$ 
the height after the $i$-th pair in a rigid ordering~$\btau$,
so $H_0=0$ and $H_i=H_{i-1}+x_i+y_i$ for $i>0$. 
Now, for every $i>0$, the entries of~$\btau$ 
satisfy two inequalities:
\begin{equation}\label{eq:R}
  \text{(R1)}\qquad -y_i \;>\; H_{i-1}, \qquad\qquad \text{(R2)}\qquad H_i\;\geqslant\;0 .
\end{equation}
Conversely, every ordering of the form as in~\eqref{EqFormRigid} and satisfying \eqref{eq:R} is legal and rigid.
\end{proposition}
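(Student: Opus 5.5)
We must show that a rigid ordering has the shape \eqref{EqFormRigid}, that it satisfies (R1)--(R2), and that every ordering of this shape satisfying (R1)--(R2) is legal and rigid. Throughout, rigidity means: for every pair $i$ the walk is legal, $T_{2i-1}+\tau_{2i}\geqslant 0$, while the pair is not free, $T_{2i-1}+\tau_{2i+1}<0$.

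\emph{Step 1: the zero step comes first.} By Lemma~\ref{LemStableFree}, if $0$ sat in some pair, that pair would be free. Hence rigidity forces $\tau_1=0$. Consequently $T_1=0$, and the heights before the pairs are $T_{2i-1}=H_{i-1}$ with $H_0=0$.

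\emph{Step 2: each pair is a payment followed by a debt.} Fix a pair $i$ with height $H:=H_{i-1}\geqslant 0$ before it. Non-freeness gives $H+\tau_{2i+1}<0$, so $\tau_{2i+1}<-H\leqslant 0$; the second entry is therefore a debt $y_i<0$ with $-y_i>H$, which is (R1). For the first entry $\tau_{2i}$, legality after the pair gives $H+\tau_{2i}+y_i=H_i\geqslant 0$, which is (R2). Combining this with $y_i<-H$ yields $\tau_{2i}\geqslant -H-y_i>0$, so the first entry is a payment $x_i>0$. Since every one of the $m$ pairs contains exactly one positive and one negative entry, and $V_p\setminus\{0\}$ consists of $m$ positive and $m$ negative numbers, the $x_i$ form an arrangement of $1,\dots,m$ and the $-y_i$ form an arrangement of $1,\dots,m$.

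\emph{Step 3: the converse.} Take an ordering of shape \eqref{EqFormRigid} satisfying (R1)--(R2). The heights are $T_1=0$, $T_{2i}=H_{i-1}+x_i>0$, and $T_{2i+1}=H_i\geqslant 0$, so the ordering is legal. Each pair has $T_{2i-1}+\tau_{2i+1}=H_{i-1}+y_i<0$ by (R1), so no pair is free, and the ordering is rigid.

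The only delicate point is Step 2, where positivity of the first entry has to be extracted from the other two inequalities. That argument uses $H_{i-1}\geqslant 0$, which holds by legality, together with $\tau_1=0$ from Step 1 to identify $T_{2i-1}$ with $H_{i-1}$.
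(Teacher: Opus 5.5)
Your proof is correct and follows the paper's argument essentially step by step. Negating \eqref{eq:free} gives $y_i<-T_{2i-1}\leqslant 0$, legality after the pair then forces $x_i>0$, and the converse checks legality at the even and odd slots and non\/-\/freeness via (R1). The one difference is that you get $\tau_1=0$ from Lemma~\ref{LemStableFree}, whereas the paper obtains it by counting: all $2m$ pair entries are nonzero, so the zero of $V_p$ must sit in slot~$1$. Your Step~2 already contains this counting argument, since its sign conclusions never use $\tau_1=0$, so Step~1 is redundant but harmless.
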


\begin{example}
Before the proof of Proposition~\ref{prop:shape}, a good picture to have in mind (for $p=4$ in particular) is this:
\begin{equation}\label{EqPictureInMind}
  \btau=(\,\underbrace{0}_{\text{wait}},\ \overbrace{\underbrace{2}_{x_1},\underbrace{-1}_{y_1}}^{\text{pair }1},\
  \overbrace{\underbrace{3}_{x_2},\underbrace{-3}_{y_2}}^{\text{pair }2},\
  \overbrace{\underbrace{1}_{x_3},\underbrace{-2}_{y_3}}^{\text{pair }3}\,),
  \qquad
  \begin{array}{c}
  H_0=0,\ H_1=1,\ H_2=1,\ H_3=0,\\[2pt]
  \text{(R1)}:\ 1>0,\ \ 3>1,\ \ 2>1 .
  \end{array}
\end{equation}
This ordered increments tuple is produced from the permutation $\sigma=(1,3,6,2,8,4,7,5)$, and it is rigid: no pair is free because in each pair
the debt $y_i$ is larger in size than the height $H_{i-1}$ the pair started from, so the debt cannot be taken first.
\end{example}

\begin{proof}
Let an increments tuple $\btau$ be rigid, and put $x_i=\tau_{2i}$, $y_i=\tau_{2i+1}$. Pair $i$ is not free, so we negate \eqref{eq:free} to get \(T_{2i-1}+y_i<0\), i.e.\ \(y_i<-T_{2i-1}\). Furthermore, \(T_{2i-1}\geqslant 0\) because \(\btau\) is legal. Thus, $y_i<-T_{2i-1}\leqslant 0$. The walk is legal at the end of the pair, so \(T_{2i+1}\geqslant 0\). Since \(T_{2i+1} = T_{2i-1} + x_i + y_i\), we deduce that 
\[
x_i = T_{2i+1} - T_{2i-1} - y_i \geqslant 0 - T_{2i-1} - y_i = -(T_{2i-1} + y_i).
\]
Thus the $2m$ entries in the pairs are
nonzero; they are all the nonzero elements of the centered set $V_p$
in~\eqref{EqCenteredSet}, so slot $1$ holds $0$, the $x_i$ are
the positive elements and the $y_i$ the negative ones. With $\tau_1=0$ we have $H_0=0$ and $T_{2i-1}=H_{i-1}$, and (R1) is the statement that the $i$th pair is not free while (R2) is the legality at the odd slots. 

Conversely, for an ordering of the stated form~\eqref{EqFormRigid}, condition (R1) says no
pair is free, condition (R2) gives the legality at the odd slots, and legality at the even slots, $T_{2i}=H_{i-1}+x_i\geqslant 0$, is automatic.
\end{proof}

\begin{example}[cf.\ Tables~\ref{tab:moves} and~\ref{tab:moves-full}]\label{ExAfterM2p234}
For $p=2$, we recall from~\cite{cpWron,Method} 
the explicit description of the set $\Phi_2=\{(1,2,3,4)$,\ $(1,2,4,3)$,\ $(1,3,2,4)\}$, 
whence their increment orderings are 
$\btau=(1,0,-1)$, $(0,1,-1)$, and $(1,-1,0)$ respectively.
In the first ordering, the only\/-\/available 
pair is free ($T_1+\tau_3=0$), and exchanging it gives the third ordering; these two cancel in the sum of parities. The second ordering is rigid; it is the staircase.

\smallskip\noindent$\bullet$\quad
For $p=3$ (see Appendix~\ref{AppPhi3} on p.~\pageref{AppPhi3}), Move~1 does most of the job: it pairs $32$ free permutations out of $35$ in~$\Phi_3$. The 
three rigid orderings $\btau$ that remain after Move~1 are
$(0,1,-1,2,-2)$, $(0$,\ $2$,\ $-1$,\ $1$,\ $-2)$, and $(0,2,-2,1,-1)$; 
their `parent' permutations are 
$\sigma=(1,2,6,3,5,4)$, $(1$,\ $2$,\ $5$,\ $3$,\ $6$,\ $4)$, and $(1,3,5,2,6,4)$ respectively. 

\smallskip\noindent$\bullet$\quad
For $p=4$, Move~1 leaves $15$
rigid orderings out of $1001$; these fifteen permutations 
are listed in Table~\ref{tab:rigid4} on p.~\pageref{tab:rigid4} below.
\end{example}

\section{Move 2: exchange two debts under the largest payment}\label{sec:move2}
\noindent%
Before we introduce 
the second move, we define a \emph{locked tail} and state two useful lemmas about rigid orderings.

\begin{definition}\label{def:locked}
Let $\btau$ be rigid and $0\leqslant k\leqslant m$. The pairs $k+1,\dots,m$ of $\btau$ form a
\emph{locked tail} if $x_j=j$ and $y_j=-j$ for all $j>k$, that is, if
$\btau=(\,\dots,\ k{+}1,-(k{+}1),\ \dots,\ m,-m\,)$ ends as the staircase does.
\end{definition}

\begin{example}
For $p=5$ (so $m=4$), the rigid increments' tuple
\[
  \btau=(\,0,\ 2,-1,\ 1,-2,\ \overbrace{3,-3,\ 4,-4}^{\text{locked tail}}\,)
\]
of the permutation $\sigma=(1,2,10,3,9,4,7,5,8,6)$ has a locked tail formed by its pairs $3$ and $4$, with $k=2$; the walk is at height $0$ before each of these two pairs. The tail does not extend further left, since $x_2=1\neq 2$.
\end{example}

\begin{lemma}[a locked tail]\label{lem:tail}
Let a tuple $\btau$ be rigid and suppose $x_j=j$ for all $j>k$, for some $k$ in the range
$0\leqslant k\leqslant m=p-1$. Then the pairs $k+1,\dots,m$ form a locked tail, and
$H_{j-1}=0$ for all $j>k$.
\end{lemma}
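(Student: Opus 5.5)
We prove the statement by downward induction on $j$, from $j=m$ down to $j=k+1$. Since $x_1,\dots,x_m$ is an arrangement of $1,\dots,m$ (Proposition~\ref{prop:shape}) and $x_j=j$ for all $j>k$, the payments $x_1,\dots,x_k$ are exactly $\{1,\dots,k\}$. It then suffices to show that the debts $y_{k+1},\dots,y_m$ are forced to be $-(k+1),\dots,-m$ in order, and that the heights $H_{j-1}$ before these pairs vanish.

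The key tool is a sum identity for the tail. Since $H_m=T_{2m+1}=\sum_{v\in V_p}v=0$, we have
\[
H_{j-1} \;=\; H_m-\sum_{i\geqslant j}(x_i+y_i) \;=\; -\sum_{i\geqslant j}(i+y_i)\qquad\text{for every } j>k.
\]
Take first $j=m$. Condition (R1) gives $-y_m>H_{m-1}=-(m+y_m)$, which is automatic and yields nothing. Condition (R2) at the previous step gives $H_{m-1}\geqslant 0$, that is, $-y_m\geqslant m$. Since every $-y_i\leqslant m$, this forces $y_m=-m$ and $H_{m-1}=0$.

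For the inductive step, suppose $y_i=-i$ and $H_{i-1}=0$ for all $i>j$, where $j>k$. Then $H_j=0$, and $H_j=H_{j-1}+j+y_j$ gives $H_{j-1}=-(j+y_j)$. Legality (R2) for $H_{j-1}$ yields $-y_j\geqslant j$. The values $m,m-1,\dots,j+1$ are already used by $-y_{j+1},\dots,-y_m$, so $-y_j\leqslant j$. Hence $y_j=-j$ and $H_{j-1}=0$, which closes the induction.

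Two edge cases need checking. When $k=m$ the tail is empty and the claim is vacuous. When $j=1$ the statement $H_0=0$ holds by definition.

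No step should be a serious obstacle. The only point needing care is the counting argument that the remaining debts lie in $\{-1,\dots,-j\}$, which uses the bijectivity of $-y_\bullet$ onto $\{1,\dots,m\}$ from Proposition~\ref{prop:shape}. Note that (R1) is never needed in this argument; only (R2) and the total sum $H_m=0$ are used.
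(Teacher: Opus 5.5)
Your proof is correct and follows essentially the same route as the paper: downward induction from $H_m=0$, where legality $H_{j-1}\geqslant 0$ gives $-y_j\geqslant j$, and the bijectivity of the debts onto $\{1,\dots,m\}$ gives $-y_j\leqslant j$. Your separate base case $j=m$ is just the paper's inductive step with a vacuous hypothesis, and the tail-sum identity and the remark about (R1) are harmless but not needed.
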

\begin{proof}
Downward induction on $j$, starting from $H_m=T_{N-1}=0$. Let $j>k$ and assume $H_j=0$
and $y_\ell=-\ell$ for $\ell>j$. Then the debts $-y_{j+1},\dots,-y_m$ are $j+1,\dots,m$, so
$-y_j\leqslant j$. From $0=H_j=H_{j-1}+x_j+y_j=H_{j-1}+j+y_j$ we get $-y_j=H_{j-1}+j\geqslant j$.
Thus $-y_j=j$ and $H_{j-1}=0$.
\end{proof}

\begin{lemma}[exchanging two debts]\label{lem:debts}
Let an increment tuple $\btau$ be rigid. For some $i$ in the range $1\leqslant i<m$,  suppose $x_i\geqslant -y_i$ and $x_i\geqslant -y_{i+1}$. Then
the ordering obtained by exchanging $y_i$ and $y_{i+1}$ is rigid.
\end{lemma}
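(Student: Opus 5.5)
The plan is to use the converse half of Proposition~\ref{prop:shape}. The exchanged ordering $\btau'$ keeps the form~\eqref{EqFormRigid}: the payments are untouched and the debts are merely permuted. So it suffices to check conditions (R1) and (R2) for $\btau'$. The exchange of $y_i$ and $y_{i+1}$ affects only pairs $i$ and $i+1$. Every other $H_j$ is the same as before, because $H_j$ with $j\leqslant i-1$ does not involve these debts, while $H_j$ with $j\geqslant i+1$ involves them only through the sum $y_i+y_{i+1}$. Consequently (R1) and (R2) for all pairs $j\neq i,i+1$ are inherited verbatim from $\btau$. Only the new height $H_i' = H_{i-1}+x_i+y_{i+1}$ and the two (R1) conditions at pairs $i$ and $i+1$ need attention; (R2) at pair $i+1$ holds because $H_{i+1}'=H_{i+1}$.

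The key steps, in order, are the following.

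\textbf{(R2) at pair $i$.} We need $H_i'=H_{i-1}+x_i+y_{i+1}\geqslant 0$. This follows from $H_{i-1}\geqslant 0$ (legality of $\btau$) and the hypothesis $x_i\geqslant -y_{i+1}$.

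\textbf{(R1) at pair $i$.} We need $-y_{i+1}>H_{i-1}$. From (R1) for the old pair $i+1$ we have $-y_{i+1}>H_i = H_{i-1}+x_i+y_i$. The hypothesis $x_i\geqslant -y_i$ gives $x_i+y_i\geqslant 0$, hence $H_i\geqslant H_{i-1}$, and therefore $-y_{i+1}>H_{i-1}$.

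\textbf{(R1) at pair $i+1$.} We need $-y_i>H_i'=H_{i-1}+x_i+y_{i+1}$. By the hypothesis $x_i\geqslant -y_{i+1}$ we have $x_i+y_{i+1}\geqslant 0$. The direct route would be to bound $H_i'$ from above, but the hypotheses give only lower bounds on $x_i+y_{i+1}$, so that route fails. This is the step I expect to be the main obstacle. Instead, I would use the strict inequality together with the distinctness of the debts. We know $-y_i>H_{i-1}$ from the old (R1) at pair $i$, and $-y_{i+1}>H_{i-1}+x_i+y_i$ from the old (R1) at pair $i+1$. I would first try to combine these two facts with $x_i\geqslant\max(-y_i,-y_{i+1})$ to reach the needed bound.

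If that combination does not close, my fallback is to look for a missing constraint. That constraint may be that $x_i$ is the largest payment, as the section title suggests, so that it dominates everything relevant. A second possibility is to split into the cases $-y_i<-y_{i+1}$ and $-y_i>-y_{i+1}$. In the first case the needed inequality $-y_i>H_{i-1}+x_i+y_{i+1}$ should follow by rearranging the old (R1) at pair $i+1$ with the roles of the debts swapped. In the second case it would follow from the strict old (R1) at pair $i$ plus $x_i+y_{i+1}\leqslant x_i+y_i$ adjusted by the size ordering.

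Once (R1) and (R2) are verified at all pairs, the converse part of Proposition~\ref{prop:shape} yields that $\btau'$ is legal and rigid.
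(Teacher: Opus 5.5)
\textbf{A gap remains at (R1) for pair $i+1$.} Your reduction is the same as the paper's. The exchanged tuple keeps the shape \eqref{EqFormRigid}, and only $H_i$ changes. Your checks of (R2) at pair $i$, (R1) at pair $i$ and (R2) at pair $i+1$ coincide with the paper's. The problem is the remaining check, (R1) at pair $i+1$: you leave it open and misdiagnose it. You say that bounding $H_i'$ from above fails because the hypotheses only give lower bounds on $x_i+y_{i+1}$. But the upper bound is not supposed to come from the hypotheses. It comes from the old (R1) at pair $i+1$, namely $-y_{i+1}>H_i$:
\[
H_i' \;=\; H_i + y_{i+1} - y_i \;<\; -y_{i+1} + y_{i+1} - y_i \;=\; -y_i .
\]
Equivalently, the target $-y_i>H_{i-1}+x_i+y_{i+1}$ rearranges to $-(y_i+y_{i+1})>H_{i-1}+x_i$. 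This is symmetric in the two debts, so it is literally the old (R1) at pair $i+1$, $-y_{i+1}>H_{i-1}+x_i+y_i=H_i$. You had written that inequality down one line earlier. This check uses neither hypothesis on $x_i$. No case split is needed, and no extra constraint such as ``$x_i$ is the largest payment''; the lemma is true exactly as stated. The paper does the same rearrangement, via $H_{i-1}+x_i=H_i-y_i$.

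\textbf{Your fallbacks would not have closed the step as written.} Your ``first case'' idea, rearranging the old (R1) at pair $i+1$, is in fact the general argument, so restricting it to $-y_i<-y_{i+1}$ is unnecessary. Your ``second case'' argument is wrong. If $-y_i>-y_{i+1}$, then $y_{i+1}>y_i$, so $x_i+y_{i+1}>x_i+y_i$, not $\leqslant$. Moreover, getting the target from the old (R1) at pair $i$, i.e.\ $-y_i>H_{i-1}$, would need $x_i+y_{i+1}\leqslant 0$. Together with the hypothesis $x_i\geqslant -y_{i+1}$, that forces $x_i=-y_{i+1}$, which is not available in general. For example, take $\btau=(0,\,3,-3,\,1,-1,\,2,-2)$ with $i=1$. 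The target is $3>2$, but the old (R1) at pair $1$ only gives $3>0$. Once you insert the one-line rearrangement above, your proof is complete and identical to the paper's.
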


\begin{proof}
The new ordering has the shape~\eqref{EqFormRigid} from Proposition~\ref{prop:shape}, 
and only one height changes to reach it:
$H_i$ becomes $H_i'=H_{i-1}+x_i+y_{i+1}$. We check conditions~\eqref{eq:R} for pairs $i$ and $i+1$;
all other instances of \eqref{eq:R} involve only unchanged quantities.
\begin{itemize}
\item (R1) at $i$: $-y_{i+1}>H_{i-1}$. Indeed $-y_{i+1}>H_i$ by (R1) at $i+1$ for
      $\btau$, and $H_i=H_{i-1}+x_i+y_i\geqslant H_{i-1}$ because $x_i\geqslant -y_i$ by our initial assumption.
\item (R2) at $i$: $H_i'=H_{i-1}+x_i+y_{i+1}\geqslant 0$ because $H_{i-1}\geqslant 0$ and $x_i\geqslant -y_{i+1}$ by another initial assumption.
\item (R1) at $i+1$: $-y_i>H_i'$ means $-y_i-y_{i+1}>H_{i-1}+x_i$ (by definition of \(H'_i\)). Again using \(H_i = H_{i-1} + x_i + y_i\implies H_i-  y_i = H_{i-1} + x_i\), we get
      $-y_{i+1}>H_i$, which is (R1) at $i+1$ for~$\btau$.
\item (R2) at $i+1$: $H_{i+1}'=H_i'+x_{i+1}+y_i=H_{i-1} + x_i + y_{i+1}+x_{i+1}+y_i=H_{i+1}\geqslant 0$. \qedhere
\end{itemize}
\end{proof}

\begin{definition}\label{DefMove2}
Move~2 is the exchange of two debts $y_i \rightleftarrows y_{i+1}$ in a rigid increments' tuple $\btau$ that satisfies the assumptions of Lemma~\ref{lem:debts};
the exact choice of the index~$i$ will be specified 
by using Eq.~\eqref{EqWhichPaymentBig} in the proof of our main theorem.
\end{definition}

\begin{example}\label{ex:debts}
Apply Move~2 
with $i=2$ to the rigid tuple
$\btau=(0,\,2,-1,\,3,-3,\,1,-2)$ from~\eqref{EqPictureInMind} on p.~\pageref{EqPictureInMind} above: 
$x_2=3\geqslant -y_2=3$ and $x_2=3\geqslant -y_3=2$, so we may exchange $y_2=-3$ and $y_3=-2$:
\begin{align*}
  \btau =(\,0,\ 2,-1,\ 3,\underline{-3},\ 1,\underline{-2}\,)\ &\longleftrightarrow\
  (\,0,\ 2,-1,\ 3,\underline{-2},\ 1,\underline{-3}\,) = \btau',\\
  H=(0,1,1,0)\ &\longleftrightarrow\ H'=(0,1,2,0).
\end{align*}
Only $H_2$ changed, from $1$ to $2$. The four checks with numbers: (R1) at $2$: $2>1$;
(R2) at $2$: $2\geqslant 0$; (R1) at $3$: $3>2$; (R2) at $3$: $0\geqslant 0$.
In terms of permutations, 
$\sigma=(1,3,6,2,8,4,7,5)$ (parity\/-\/even) becomes $\sigma'=(1,2,6,3,8,4,7,5)$ (parity\/-\/odd): the entries
$\sigma(4)=2$ and $\sigma(2)=3$ have been exchanged, because $y_2$ sits in slot $5$ of
$\btau$, which is position $N+1-5=4$ of $\sigma$, and $y_3$ sits in slot $7$, which is
position~$2$. 
Figure~\ref{fig:move2} illustrates this Move~2 for the tuple~$\btau$ from~\eqref{EqPictureInMind}.
\end{example}
\begin{figure}[htb]
\centering
\begin{tikzpicture}[x=1.15cm,y=0.5cm]
  \fill[gray!12] (4,-0.4) rectangle (7,5.4);
  \draw[walkaxis] (0,0) -- (7.7,0) node[right,font=\scriptsize] {$k$};
  \draw[walkaxis] (0,0) -- (0,5.4) node[above,font=\scriptsize] {$T_k$};
  \foreach \k in {1,...,7} {\draw[gray,thin] (\k,0.1) -- (\k,-0.1) node[below,font=\scriptsize] {$\k$};}
  \draw[ghost] (4,4) -- (5,2) -- (6,3) -- (7,0);
  \draw[walk] (0,0) -- (1,0) -- (2,2) -- (3,1) -- (4,4) -- (5,1) -- (6,2) -- (7,0);
  \foreach \pt in {(0,0),(1,0),(2,2),(3,1),(4,4),(5,1),(6,2),(7,0)} {\node[walkdot] at \pt {};}
  \foreach \pt in {(5,2),(6,3)} {\node[ghostdot] at \pt {};}
  \node[steplabel,below] at (5,0.85) {$H_2=1$};
  \draw[gray,thin] (5.05,2.15) -- (5.45,3.4);
  \node[steplabel,above,gray!80!black] at (5.45,3.4) {$H_2'=2$};
  \node[steplabel,below] at (3,0.85) {$H_1=1$};
  \node[steplabel,right] at (7.05,0.5) {$H_3=0$};
  \node[steplabel] at (2.5,-1.6) {pair $1$};
  \node[steplabel] at (4.5,-1.6) {pair $2$};
  \node[steplabel] at (6.5,-1.6) {pair $3$};
  \draw[gray,thin] (2,-1.1) -- (3,-1.1); \draw[gray,thin] (4,-1.1) -- (5,-1.1); \draw[gray,thin] (6,-1.1) -- (7,-1.1);
  \draw[gray,thin] (2,-1.3) -- (2,-0.9); \draw[gray,thin] (4,-1.3) -- (4,-0.9); \draw[gray,thin] (6,-1.3) -- (6,-0.9); \draw[gray,thin] (7,-1.3) -- (7,-0.9);
  \draw[gray,thin] (3,-1.3) -- (3,-0.9);
  \draw[gray,thin] (5,-1.3) -- (5,-0.9);
\end{tikzpicture}
\caption{Move~2 applied to the rigid ordering of Example~\ref{ex:debts}. Solid: the walk
of $\btau=(0,2,-1,3,-3,1,-2)$. 
Dashed: the walk of its partner $\btau'=(0,2,-1,3,-2,1,-3)$, drawn
where the two differ. The debts of pairs $2$ and $3$ (shaded) are exchanged under the
payment $x_2=3$, which is at least as large as both, and only the height $H_2$ between them
moves.}
\label{fig:move2}
\end{figure}

\begin{table}[htb]
\centering\scriptsize\setlength{\tabcolsep}{3.5pt}
\caption{The two moves on $\Phi_p$ for $p\leqslant 6$, with $N=2p$ and $e_p-o_p=\sum_{\sigma\in\Phi_p}(-)^{\sigma}$. 
The table is continued up to $p=14$ in App.~\ref{AppMoves} on p.~\pageref{AppMoves}.}
\label{tab:moves}
\renewcommand{\arraystretch}{1.15}
\begin{tabular}{r r r r r r r c c r}
\toprule
$p$ & $N!$ & $(N-1)!$ & $|\Phi_p|$ & \shortstack{paired off\\by Move~1} & \shortstack{left after\\Move~1 (rigid)} & \shortstack{paired off\\by Move~2} & \shortstack{left after\\Move~2} & $e_p-o_p$ & \shortstack{rigid, in\\$\%$ of $|\Phi_p|$} \\
\midrule
$1$ & $2$ & $1$ & $1$ & $0$ & $1$ & $0$ & $1$ & $+1$ & $100\,\%$ \\
$2$ & $24$ & $6$ & $3$ & $2$ & $1$ & $0$ & $1$ & $-1$ & $33.33\,\%$ \\
$3$ & $720$ & $120$ & $35$ & $32$ & $3$ & $2$ & $1$ & $+1$ & $8.571\,\%$ \\
$4$ & $40\,320$ & $5\,040$ & $1\,001$ & $986$ & $15$ & $14$ & $1$ & $-1$ & $1.499\,\%$ \\
$5$ & $3\,628\,800$ & $362\,880$ & $53\,109$ & $52\,988$ & $121$ & $120$ & $1$ & $+1$ & $0.2278\,\%$ \\
$6$ & $479\,001\,600$ & $39\,916\,800$ & $4\,605\,271$ & $4\,603\,942$ & $1\,329$ & $1\,328$ & $1$ & $-1$ & $0.02886\,\%$ \\
\bottomrule
\end{tabular}
\end{table}

We recall from Definition~\ref{DefStaircase} that 
the \emph{staircase} is the increments' ordering 
$\btau^{*}=(0$,\ $1$,\ $-1$,\ $2$,\ $-2$,\ $\dots$,\ $m$,\ $-m)$ in which
$x_j=j$ for all $j$; it is rigid ($H_i=0$ for all $i$, and $-y_i=i>0$), 
and it is the increments tuple 
of the staircase permutation~$\sigma^{*}$: 
reading $\sigma^{*}$ from the right and subtracting $p+1$ gives
$0,1,-1,2,-2,\dots$. By Lemma~\ref{lem:tail} with $k=0$, the staircase is the \emph{only}
rigid ordering with $x_j=j$ for all $j$: the payments determine the debts to follow immediately.

\begin{proof}[Proof of the main Theorem~\ref{thm:parity} (Part I: why $(-)^{\sigma^*}$)]   
Let $\btau$ be rigid and different from the staircase, and let
\begin{equation}\label{EqWhichPaymentBig}
  k \;=\; \max\{\,j : x_j\ne j\,\} \;\geqslant\;1 .
\end{equation}
By Lemma~\ref{lem:tail}, $y_j=-j$ for $j>k$
(\textbf{NB:} this set $\{j\}$ of indices can be empty, but only for the staircase increments $\btau^*$); 
so the pairs $1,\dots,k$ carry exactly the
payments $1,\dots,k$ and the debts $-1,\dots,-k$. The payment $k$ sits in some pair $i\le k$,
and $i\ne k$ because $x_k\ne k$; so $i<k\leqslant m$. Every debt in the pairs $1,\dots,k$ is at
most $k=x_i$, in particular $-y_i\leqslant x_i$ and $-y_{i+1}\leqslant x_i$ (as $i+1\leqslant k$), so
Lemma~\ref{lem:debts} applies: \emph{exchange $y_i$ and $y_{i+1}$} by Move~2. 
The result is rigid; it
has the same payments in the same slots, thus the same $k$ and the same $i$, so applying
the rule to it exchanges the same two slots and 
restores the increment tuple~$\btau$.
Performed for these choices of $k$ and $i$,
Move~2 is a parity\/-\/reversing
involution without fixed points on the rigid increment orderings 
other than the staircase. Thus
\[
  \sum_{\btau\ \mathrm{rigid}} (-)^{\sigma}\;=\; (-)^{\sigma^{*}} .
\]
This proves the first equality in Theorem~\ref{thm:parity}.
\end{proof}

\begin{table}[htb]
\centering\small
\caption{The $15$ rigid increment orderings $\btau$ for $p=4$ (so $m=3$), grouped as Move~2 pairs them. For
each $\btau$, we restore the permutation $\sigma$ and its parity, the index $k=\max\{j:x_j\ne j\}$, the pair $i$
holding the payment $+k$, and the two debts exchanged (those are in boldface within~$\btau$).}
\label{tab:rigid4}
\renewcommand{\arraystretch}{1.15}
\begin{tabular}{l l c c c l}
\toprule
$\btau$ & $\sigma$ & parity & $k$ & $i$ & exchanged \\
\midrule
$(0,\,1,-1,\,2,-2,\,3,-3)$ & $(1,2,8,3,7,4,6,5)=\sigma^{*}$ & odd & none & & staircase, survives \\
\midrule
$(0,\,2,\mathbf{-2},\,1,\mathbf{-1},\,3,-3)$ & $(1,2,8,4,6,3,7,5)$ & odd  & $2$ & $1$ & $y_1,y_2$ \\
$(0,\,2,-1,\,1,-2,\,3,-3)$ & $(1,2,8,3,6,4,7,5)$ & even & $2$ & $1$ & \\
\midrule
$(0,\,1,-1,\,3,\mathbf{-3},\,2,\mathbf{-2})$ & $(1,3,7,2,8,4,6,5)$ & odd  & $3$ & $2$ & $y_2,y_3$ \\
$(0,\,1,-1,\,3,-2,\,2,-3)$ & $(1,2,7,3,8,4,6,5)$ & even & $3$ & $2$ & \\
\midrule
$(0,\,2,-2,\,3,\mathbf{-3},\,1,\mathbf{-1})$ & $(1,4,6,2,8,3,7,5)$ & odd  & $3$ & $2$ & $y_2,y_3$ \\
$(0,\,2,-2,\,3,-1,\,1,-3)$ & $(1,2,6,4,8,3,7,5)$ & even & $3$ & $2$ & \\
\midrule
$(0,\,2,-1,\,3,\mathbf{-3},\,1,\mathbf{-2})$ & $(1,3,6,2,8,4,7,5)$ & even & $3$ & $2$ & $y_2,y_3$ \\
$(0,\,2,-1,\,3,-2,\,1,-3)$ & $(1,2,6,3,8,4,7,5)$ & odd  & $3$ & $2$ & \\
\midrule
$(0,\,3,\mathbf{-3},\,1,\mathbf{-1},\,2,-2)$ & $(1,3,7,4,6,2,8,5)$ & odd  & $3$ & $1$ & $y_1,y_2$ \\
$(0,\,3,-1,\,1,-3,\,2,-2)$ & $(1,3,7,2,6,4,8,5)$ & even & $3$ & $1$ & \\
\midrule
$(0,\,3,\mathbf{-3},\,2,\mathbf{-2},\,1,-1)$ & $(1,4,6,3,7,2,8,5)$ & odd  & $3$ & $1$ & $y_1,y_2$ \\
$(0,\,3,-2,\,2,-3,\,1,-1)$ & $(1,4,6,2,7,3,8,5)$ & even & $3$ & $1$ & \\
\midrule
$(0,\,3,\mathbf{-3},\,2,\mathbf{-1},\,1,-2)$ & $(1,3,6,4,7,2,8,5)$ & even & $3$ & $1$ & $y_1,y_2$ \\
$(0,\,3,-1,\,2,-3,\,1,-2)$ & $(1,3,6,2,7,4,8,5)$ & odd  & $3$ & $1$ & \\
\bottomrule
\end{tabular}
\end{table}

\begin{example}
In Example~\ref{ex:debts} the payments are $\boldsymbol{x}=(2,3,1)$; no $x_j$ equals $j$, so $k=3$, the
payment $+3$ sits in the pair indexed by $i=2<3$, and Move~2 is exactly the exchange carried out there. 

\noindent$\bullet$\quad Another example 
(with a locked tail):  $\btau=(0,\,2,-2,\,1,-1,\,3,-3)$, that is
$\sigma=(1$,\ $2$,\ $8$,\ $4$,\ $6$,\ $3$,\ $7$,\ $5)$, has $\boldsymbol{x}=(2,1,3)$, so $x_3=3$ and $k=2$; Lemma~\ref{lem:tail}
locks the last pair as $(3,-3)$ at height $H_2=0$; the payment $2$ sits in the pair indexed by $i=1$, and
exchanging $y_1=-2$ with $y_2=-1$ gives the increments $\btau'=(0$,\ $2$,\ $-1$,\ $1$,\ $-2$,\ $3$,\ $-3)$; 
the new permutation is
$\sigma'=(1$,\ $2$,\ $8$,\ $3$,\ $6$,\ $4$,\ $7$,\ $5)$. 

Table~\ref{tab:rigid4} lists all $15$ rigid orderings for
$p=4$ with their partners; the seven pairs cancel and the staircase, which is parity\/-\/odd, remains,
giving the value of the overall sum: $(-1)^{4+1}= -1$.
\end{example}

\begin{remark}[Two moves suffice]
After Moves~1 and~2 have paired up permutations in~$\Phi_p$, only the staircase permutation~$\sigma^{*}$ can remain; no third move is needed. 
Indeed, every $\sigma\in\Phi_p$ either has a free pair or it does not. In the former case it is paired off by Move~1; in the latter case it is, by definition, rigid. A rigid ordering~$\btau$ consists of a locked tail (possibly empty; see Lemma~\ref{lem:tail}) at the end and an unlocked part before it, and Move~2 exchanges two debts inside the unlocked part, under its largest payment~$k$ from~\eqref{EqWhichPaymentBig}. This is possible for every rigid~$\btau$ except the staircase~$\btau^{*}$, in which everything after the wait in slot~$1$ is locked. Finally, Move~1 does not change which pairs are free 
(Lemma~\ref{LemRemainFree}), and Move~2 sends a rigid ordering to a rigid ordering with the same payments, so neither move leaves its own case. Thus every $\sigma\neq\sigma^{*}$ in~$\Phi_p$ is paired by exactly one move, exactly once (see Figure~\ref{fig:moves-flow}).
\end{remark}
\begin{figure}[H]
\centering
\definecolor{oeMoveA}{RGB}{18,128,112}
\definecolor{oeMoveB}{RGB}{226,132,12}
\definecolor{oeStar}{RGB}{238,178,0}
\begin{tikzpicture}[x=1cm,y=1cm,
  q/.style={draw=black!60,fill=black!4,rounded corners=3pt,align=center,minimum width=4.9cm,minimum height=1.05cm,inner sep=5pt,font=\small},
  mA/.style={draw=oeMoveA,fill=oeMoveA!10,rounded corners=3pt,align=center,minimum width=4.9cm,minimum height=1.05cm,inner sep=5pt,font=\small},
  mB/.style={draw=oeMoveB,fill=oeMoveB!12,rounded corners=3pt,align=center,minimum width=4.9cm,minimum height=1.05cm,inner sep=5pt,font=\small},
  st/.style={draw=oeStar!80!black,fill=oeStar!18,rounded corners=3pt,align=center,minimum width=4.9cm,minimum height=1.05cm,inner sep=5pt,font=\small},
  ar/.style={->,thick,black!70},
  lab/.style={font=\footnotesize,black!70}]
  \node[q]  (S)  at (0,0)     {take any $\sigma\in\Phi_p$};
  \node[q]  (Q1) at (0,-1.7)  {does $\btau$ have\\ a free pair?};
  \node[mA] (M1) at (6.4,-1.7) {\textbf{Move~1}: exchange the entries\\ of the first free pair};
  \node[q]  (Q2) at (0,-3.6)  {$\btau$ is rigid;\\ is $x_j=j$ for all $j$?};
  \node[mB] (M2) at (6.4,-3.6) {\textbf{Move~2}: exchange the debts\\ $y_i\rightleftarrows y_{i+1}$ under the payment $k$};
  \node[st] (SC) at (0,-5.5)  {staircase $\sigma^{*}$:\\ fully locked, left alone};
  \node[q]  (C)  at (6.4,-5.5) {cancels against its partner\\ of opposite parity};
  \draw[ar] (S) -- (Q1);
  \draw[ar] (Q1) -- node[lab,above] {yes} (M1);
  \draw[ar] (Q1) -- node[lab,right] {no} (Q2);
  \draw[ar] (Q2) -- node[lab,above] {no} (M2);
  \draw[ar] (Q2) -- node[lab,right] {yes} (SC);
  \draw[ar] (M2) -- (C);
  \draw[ar] (M1.east) -- ++(0.45,0) |- (C.east);
\end{tikzpicture}
\caption{Every $\sigma\in\Phi_p$ is either paired by Move~1, or paired by Move~2, or is the staircase permutation~$\sigma^{*}$.}
\label{fig:moves-flow}
\end{figure}

\section{The parity of the staircase permutation $\sigma^{*}$}\label{sec:staircase}
\begin{proof}[Proof of the main Theorem~\ref{thm:parity} (Part II)]   
In the staircase permutation $\sigma^{*}=(1$,\ $2$,\ $2p$,\ $3$,\ $2p-1$,\ $\dots$,\ $p$,\ $p+2$,\ $p+1)$ 
the payment entries $2p,2p-1,\dots,p+2$ occur in decreasing order at the
slots $3,5,\dots,2p-1$; 
the debt entries $1,2,\dots,p$, then neutral $p+1$ occur in
increasing order in slots $1$,\ $2$,\ $4$,\ $\ldots$,\ $2p$.
So the inversions are: all $\binom{p-1}{2}$ pairs among the large
entries, and, for the large entry in slot $2i+1$ (with $1\leqslant i\leqslant p-1$), the $p-i$ smaller entries
$i+2,\dots,p+1$ that come after it. In total
\[
  \inv\sigma^{*}\;=\;\binom{p-1}{2}+\sum_{i=1}^{p-1}(p-i)\;=\;\frac{(p-1)(p-2)}{2}+\frac{(p-1)p}{2}\;=\;(p-1)^{2},
\]
so $(-)^{\sigma^{*}}= (-1)^{(p-1)^2} = (-1)^{p-1}=(-1)^{p+1}$. 
With \eqref{eq:rigid-sum} this completes the proof of 
Theorem~\ref{thm:parity}. 
\end{proof}

\begin{example}
For $p=4$, the staircase permutation is
\[
  \sigma^{*}=(\,1,\ 2,\ \overbrace{8}^{\text{large}},\ 3,\ \overbrace{7}^{\text{large}},\ 4,\
  \overbrace{6}^{\text{large}},\ 5\,).
\]
Its inversions are: large before large, $(8,7),\,(8,6),\,(7,6)$, three of them, which is
$\binom{3}{2}$; and large before small, $(8,3),(8,4),(8,5)$, then $(7,4),(7,5)$, then
$(6,5)$, that is $3+2+1=6$. Nine inversions in all, $9=(4-1)^{2}$, so this staircase permutation $\sigma^{*}$ is parity\/-\/odd,
and $\sum_{\Phi_4}(-)^{\sigma}=-1$, which we knew from Table~\ref{TabPhiP}.
\end{example}

\begin{example}
[$p=3$] 
Of the three rigid orderings of the increments, the one
$\btau=(0$,\ $2$,\ $-1$,\ $1$,\ $-2)$ has the payments $\boldsymbol{x}=(2,1)$, so $k=2$ (since $x_2=1$), the payment $2$ is in pair $1$,
and exchanging $y_1=-1$ with $y_2=-2$ gives the tuple $\btau'=(0$,\ $2$,\ $-2$,\ $1$,\ $-1)$. 
In terms of permutations, the old
$\sigma=(1$,\ $2$,\ $5$,\ $3$,\ $6$,\ $4)$ (which is parity\/-\/odd) cancels against the new, parity\/-\/even permutation $\sigma'=(1$,\ $3$,\ $5$,\ $2$,\ $6$,\ $4)$, and the staircase permutation
$\sigma^{*}=(1$,\ $2$,\ $6$,\ $3$,\ $5$,\ $4)$ (of even parity) remains: $18-17=1 = (-)^{\sigma^*}$, 
as in Table~\ref{TabPhiP}. 
\end{example}

\section{Conclusion}
\noindent%
We seek to understand the structure of the set $\Phi_p\subseteq \BBS_{2p-1}\subsetneq \BBS_{2p}$ of late\/-\/growing permutations, as those contribute to the count of Wronskians and accumulation of the universal constant $\consta(p)$ in the right\/-\/hand side of Eq.~\eqref{EqMain}. In this paper, we established that the set $\Phi_p$ is almost symmetrically balanced by its partitioning $\Phi_p = \Phi_p^{+} \sqcup \Phi_p^{-}$ into parity\/-\/even vs parity\/-\/odd permutations:
$|\Phi_p^{-}| - |\Phi_p^{+}| = (-1)^p$, i.e.\ the two cardinalities differ by only~$\pm1$ (see Table~\ref{TabPhiP} on p.~\pageref{TabPhiP} continued in
Appendix~\ref{AppMoves} as far as $p=14$).
The parity of permutations in the dominant set interlaces by~$(-1)^{p+1}$.

Given this fact, all the more surprising it is that 
the universal constants $\consta(p)$ --\,accumulated with the signs $(-)^\sigma$ of permutations $\sigma\in\Phi_p$ that contribute by the products of falling factorials $E^{\underline{p}}_k$ (see~\cite{cpWron,Method,Valladolid})\,-- are strictly positive at all $p\in\BBN_{\geqslant 1}$, and the values grow as fast as $\consta(p) \sim p^{2p^2}$ as $p\to +\infty$, cf.~\cite{Method} for more details.

This indicates that the set $\Phi_p\subseteq \BBS_{2p-1}$ of late\/-\/growing permutations has a hidden \textbf{\textit{a}}symmetry, in contrast with its almost\/-\/symmetry w.r.t.\ the parity function. 
Specifically, positive (if $(-)^\sigma > 0$) and negative (resp., if $(-)^\sigma < 0$) 
in almost equal number of cases, the contributions from late\/-\/growing permutations $\sigma\in\Phi_p$ accumulate in the positive direction overall: $\consta(p) > 0$, irrespective of the parity of late\/-\/growing permutations in the dominant set $\Phi_p^{\pm}$, which interlaces by~$(-1)^{p+1}$.
To reveal the nature of 
this asymmetry is a 
problem for future research.

\subsection*{Discussion}
We recall that by using the monomial weights of the $p$th order differential operators alternated in the l.\/-\/h.\,s.\ of~\eqref{EqMain}, we gain the interpretation of late\/-\/growing permutations $\sigma\in\Phi_p$ as the constrained monomial degree walks in~$\BBZ_{\geqslant 0}$. 
By construction, the count of monomial degrees starts with $T_0=0$ at $k=0$ and ends by $T_{N-1}=0$ at $k=2p-1$. 
Every such 
permutation contributes $(-)^\sigma$ times the product $c(\sigma,p)=\prod_{k=1}^{N-1} E^{\underline{p}}_k $ of falling factorials $E^{\underline{p}}_k $ (introduced in~\cite{Method}) 
to the constant $\consta(p)$ in the r.\/-\/h.\,s.\ of~\eqref{EqMain}.

Consider only those late\/-\/growing permutations $\sigma\in\Phi_p$ for which 
some \emph{intermediate} degree $T_k$ is pinned to zero at $1\leqslant k \leqslant N-2$. (Let us reiterate that pinning the last degree $T_{2p-1} = 0$ does not constrain the permutations in~$\Phi_p$; the summation with signs over the entire set~$\Phi_p$ yields the value $\consta(p)$ by reaching $k=2p-1$, as explained in~\cite{Method}.)

First, let us 
examine the data in Appendix~\ref{AppPhi3} about the set $\Phi_3$ of late\/-\/growing permutations~$\sigma$ and their contributions $(-)^\sigma\,c(\sigma,3)$ to the r.\/-\/h.\,s.\ of~\eqref{EqMain} at $p=3$. 
In Fig.~\ref{Fig35walks} on p.~\pageref{Fig35walks} we draw all the `landscapes'
$k\mapsto T_k$ (with $0\leqslant k \leqslant 5 = 2p-1$ here).

We discover from Table~\ref{tab:phi3} represented in Fig.~\ref{Fig35walks} that all these partial sums (over the late\/-\/growing permutations such that $T_k=0$)
predict the integral value~$\consta(p=3)$, namely,
\[
\sum_{\substack{ {\sigma\in\Phi_3 \mid T_k=0} \\ 
{1\leqslant k \leqslant 4=2p-2} } 
} (-)^\sigma\,c(\sigma,p=3) = \pm \consta(p=3);
\]
the regular pattern of signs in the right\/-\/hand side of these equalities will presently be discussed.

In hindsight, the equality, up to a sign, of the pinned partial sums to the overall sum $\consta(p)$ (always${}>0$) could be seen for $p=2$ (cf.\ Example~\ref{ExAfterM2p234} on p.~\pageref{ExAfterM2p234} and~\cite[Ex.\:4.1]{Method}): the pinned partial sums 
for singleton\/-\/set permutations $\sigma_2=(1,2,4,3)$ and $\sigma_3=(1,3,2,4)$ were
\[
(-)^{\sigma_2}\, c(\sigma_2,p=2)=-2 \quad \text{and}\quad 
(-)^{\sigma_3}\,c(\sigma_3,p=2) = -2, 
\]
predicting the cumulative value $\consta(p=2) = +2$ up to a sign.

We verify this hidden symmetry of the sets $\Phi_p$ for larger values~$p$:

\begin{proposition}\label{PropPredict}
For every 
integer $p$ such that $1< p \leqslant 14$ and for any step~$k$ from the range $1\leqslant k \leqslant 2p-2$, consider only those late\/-\/growing permutations $\sigma\in\Phi_p\subseteq\BBS_{2p-1}$ such that the monomial degree (or \emph{height}) is pinned to $T_k\mathrel{{:}{=}}0$ at this fixed step~$k$.\footnote{\label{FigPinnedSetsOverlap}
Naturally, such sets can overlap for $p>2$, as seen from Fig.~\ref{Fig35walks} at $p=3$.}
Then the partial sum of contributions $(-)^\sigma\, c(\sigma,p)$ of these selected permutations~$\sigma$ to the integral constant $\consta(p)$ in the r.\/-\/h.\,s.\ of~\eqref{EqMain} predicts the value~$\consta(p)$\textup{:}
\begin{equation}\label{EqPredictValue}
\sum_{\substack{ {\sigma\in\Phi_p \mid T_k=0} \\ 
{1\leqslant k \leqslant 2p-2} } 
} (-)^\sigma\,c(\sigma,p) = \pm \consta(p)
\end{equation}
where the signs in the right\/-\/hand side 
are read off Table~\ref{TabSignsP14}.
\end{proposition}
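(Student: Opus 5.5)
Proposition~\ref{PropPredict} is a finite statement: it covers $2\leqslant p\leqslant 14$ and $1\leqslant k\leqslant 2p-2$. I would prove it by exact computer verification, organised so that no permutation of $\Phi_p$ is listed individually. A brute-force enumeration is impossible: already $|\Phi_{14}|$ is of the order of $(26)!$, by the Claim after Table~\ref{TabPhiP}. The approach rests on reading a walk $\btau$ as a growing chain of subsets of $V_p$. After $k$ steps the entries used so far form a subset $S\subseteq V_p$ with $|S|=k$. The height $T_k=\sum_{v\in S}v$ depends only on $S$. Hence so does the exponent $E_k=T_k+p$, whose falling factorial $E_k^{\underline{p}}$ enters the product $c(\sigma,p)$.

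\textbf{Step 1: the sign is subset-local.} Appending a new entry $v\notin S$ at time $k+1$ places $\sigma(N-k)=v+p+1$ to the left of all previously placed entries. It therefore creates exactly $\#\{u\in S: u<v\}$ new inversions, a number determined by $(S,v)$. The entry $\sigma(1)=1$ adds none (cf.\ Remark~\ref{RemParityVsShift}). Consequently $(-)^\sigma\,c(\sigma,p)$ factorises as a product of edge weights along the chain $\varnothing=S_0\subset S_1\subset\cdots\subset S_{N-1}=V_p$. Each edge $S\to S\cup\{v\}$ is admitted only if $\sum(S\cup\{v\})\geqslant 0$, which is the legality filter~\eqref{eq:filter}, and it carries the weight
\[
(-1)^{\#\{u\in S:\,u<v\}}\cdot\bigl(\textstyle\sum_{u\in S\cup\{v\}}u+p\bigr)^{\underline{p}}.
\]
The final step to height $0$ with exponent $p$ contributes the factor $p!$, which I would normalise exactly as the generator of~\cite{Method} does.

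\textbf{Step 2: forward and backward tables.} By dynamic programming over the $2^{2p-1}$ subsets I would compute two tables. The forward table $F(S)$ is the signed weighted sum over all legal chains from $\varnothing$ to $S$. The backward table $B(S)$ is the corresponding sum over legal chains from $S$ to $V_p$. Then $\sum_{S}F(S)B(S)$ over $|S|=k$ gives the full constant, and $F(V_p)=\consta(p)$ serves as a consistency check against OEIS A392714 and~\cite{Method}. The pinned sum in~\eqref{EqPredictValue} is the same sum restricted to $|S|=k$ with $\sum S=0$. Each table entry is visited once per added element, so the cost is $O\bigl((2p-1)\,2^{2p-1}\bigr)$ big-integer multiply-adds. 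This is about $2^{27}\cdot 27$ operations at $p=14$, which is feasible.

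\textbf{Step 3: signs and the main obstacle.} From the computed pinned sums I would read off the sign for each pair $(p,k)$ and tabulate them as Table~\ref{TabSignsP14}. The small cases $p=2,3$ serve as cross-checks against the hand computations above and Appendix~\ref{AppPhi3}. I expect the main obstacle to be resources rather than ideas. The entries reach magnitudes near $p^{2p^2}$, i.e.\ hundreds of digits, and must be stored for up to $2^{27}$ subsets in both tables. To control this, I would first try storing only the subsets reachable by legal chains, using a sparse hash map keyed by bitmask. I would also process layers $|S|=k$ one at a time, discarding layer $k-1$ once layer $k$ is built. If memory still fails, I would verify the identity modulo several large primes and combine the residues by the Chinese remainder theorem, with a bound on $|\consta(p)|$ guaranteeing uniqueness. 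A conceptual, non-computational proof of the $\pm\consta(p)$ phenomenon is not attempted here.
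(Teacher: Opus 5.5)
Your proposal is correct and takes essentially the same route as the paper. The paper gives no conceptual argument for Proposition~\ref{PropPredict}: it reports an exact computer verification for $2\leqslant p\leqslant 14$, recorded in Table~\ref{TabSignsP14}, and leaves the general case as Conjecture~\ref{ConjPredictCp}. Your forward/backward dynamic programme over subsets $S\subseteq V_p$ is a sound, explicit way to carry out that verification, since both the inversion count and the falling factorials $E_k^{\underline{p}}$ depend only on the chain of subsets.
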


\begin{table}[!ht]\centering\begin{tikzpicture}
\foreach \k in {1,...,27} {\node[font=\scriptsize] at (\k*\scw,0.62) {\k};}
\node[font=\scriptsize,anchor=east] at (0.32,0.62) {$k$};
\signrow{0}{2}{-,-,+}
\signrow{1}{3}{+,-,-,+,+}
\signrow{2}{4}{-,-,+,+,-,-,+}
\signrow{3}{5}{+,-,-,+,+,-,-,+,+}
\signrow{4}{6}{-,-,+,+,-,-,+,+,-,-,+}
\signrow{5}{7}{+,-,-,+,+,-,-,+,+,-,-,+,+}
\signrow{6}{8}{-,-,+,+,-,-,+,+,-,-,+,+,-,-,+}
\signrow{7}{9}{+,-,-,+,+,-,-,+,+,-,-,+,+,-,-,+,+}
\signrow{8}{10}{-,-,+,+,-,-,+,+,-,-,+,+,-,-,+,+,-,-,+}
\signrow{9}{11}{+,-,-,+,+,-,-,+,+,-,-,+,+,-,-,+,+,-,-,+,+}
\signrow{10}{12}{-,-,+,+,-,-,+,+,-,-,+,+,-,-,+,+,-,-,+,+,-,-,+}
\signrow{11}{13}{+,-,-,+,+,-,-,+,+,-,-,+,+,-,-,+,+,-,-,+,+,-,-,+,+}
\signrow{12}{14}{-,-,+,+,-,-,+,+,-,-,+,+,-,-,+,+,-,-,+,+,-,-,+,+,-,-,+}
\end{tikzpicture}
\caption{Each cell: the sum of the contributions of all $\sigma\in\Phi_p$ with $T_k=0$ (for a fixed $k$), which is always $+\consta(p)$ (blue) or $-\consta(p)$ (orange).}
\label{TabSignsP14}
\end{table}

\begin{conjecture}\label{ConjPredictCp}
Predicting the values $\consta(p)$ via summation over the permutation subsets smaller than~$\Phi_p$, equalities~\eqref{EqPredictValue} are valid for all integers $p\geqslant 2$, with the ornament of signs in the r.\/-\/h.\,s.\ of~\eqref{EqPredictValue} regularly extending Table~\ref{TabSignsP14}: for all admissible steps $k$ 
(with $1\leqslant k \leqslant 2p-1$),
the ornament's period in the horizontal $k$-direction is $4$ and in the vertical $p$-direction is~$2$.
\end{conjecture}

In the update of paper~\cite{Method} we shall measure the gain (w.r.t.\ the computation time and memory allocation) from this shortcut, which yields 
the accelerated Algorithm~3 for calculation of the values~$\consta(p)$ in the new integer sequence OEIS A392714~(\cite{oeisA392714}).

\smallskip\noindent
\textbf{Approbation.}\quad
The conjecture $|\Phi_p^{-}| - |\Phi_p^{+}| = (-1)^p$ (that the subset $\Phi_p\subseteq\BBS_{2p-1}\subsetneq\BBS_{2p}$ is almost symmetric w.r.t.\ its partition $\Phi_p = \Phi_p^{+} \sqcup \Phi_p^{-}$ into 
parity\/-\/even vs odd permutations of $n=2p-1$ objects $\{ 2,\ldots,2p \}$), itself based on the empiric evidence in~\cite{cpWron}, was presented by the first author at the 17th Algorithmic Number Theory symposium -- ANTS17 (6--10 July 2026 in Groningen, NL), at the 36th International colloquium on group theoretic methods in Physics -- \textsc{Group36} (13--17 July 2026 in Valladolid, Spain), and at the annual colloquium in Geometry and Quantum Theory -- GQT (24--28 August 2026 in Malden near Nijmegen, NL).
The second author presented this line of study at the Open Problems session during the 2nd ISNMP Conference on nonlinear mathematical physics (28 June -- 4 July 2026 in Bad Ems, Germany).
The authors thank the organisers of these events for stimulating discussions and (partial) waiver of the registration fees for the first author.

\smallskip\noindent
\textbf{Acknowledgements.}\quad
The authors thank 
M.\,Kontsevich and O.\,Zaboronski for their attention and advice.
The travel of the first author to the \textsc{Group36} and GQT colloquia was supported by the Bernoulli Institute (via projects `Algebra' and `Graduate programme').
The travel of the second author is supported by the Bernoulli Swiss scientific travel fellowship and by the Bernoulli Institute (via project `Algebra').

\newpage
\appendix 
\section{The count of late\/-\/growing permutations, $|\Phi_p|={}$A147681($2p-1$)%
}\label{AppLateGrowLimRatios}
\noindent%
Denote by $n$ the number of permuted objects. 
The integer sequence elements A147681($n$)
count the late\/-\/growing permutations of $n$ objects.
(The values of integer sequence A147681 are taken from the OEIS entry~\cite{oeisA147681}.)
Whenever $n = 2p-1$ is odd, A147681($2p-1) = |\Phi_p|$.
Since $N = 2p$, we have $(n-1)! = (N-2)!$ (as in Table~\ref{TabPhiP} on p.~\pageref{TabPhiP}).

{\scriptsize%
\begin{verbatim}
n = 3  (odd,  |Phi_2|, N = 4)
  numerator    A147681(3) = 3
  denominator  (3-1)! = 2! = 2
  ratio        3 / 2 = 1.500000
  ratio - 1    = 0.500000

n = 5  (odd,  |Phi_3|, N = 6)
  numerator    A147681(5) = 35
  denominator  (5-1)! = 4! = 24
  ratio        35 / 24 = 1.458333
  ratio - 1    = 0.458333

n = 7  (odd,  |Phi_4|, N = 8)
  numerator    A147681(7) = 1001
  denominator  (7-1)! = 6! = 720
  ratio        1001 / 720 = 1.390278
  ratio - 1    = 0.390278

n = 9  (odd,  |Phi_5|, N = 10)
  numerator    A147681(9) = 53109
  denominator  (9-1)! = 8! = 40320
  ratio        53109 / 40320 = 1.317188
  ratio - 1    = 0.317188

n = 11  (odd,  |Phi_6|, N = 12)
  numerator    A147681(11) = 4605271
  denominator  (11-1)! = 10! = 3628800
  ratio        4605271 / 3628800 = 1.269089
  ratio - 1    = 0.269089

n = 13  (odd,  |Phi_7|, N = 14)
  numerator    A147681(13) = 589809987
  denominator  (13-1)! = 12! = 479001600
  ratio        589809987 / 479001600 = 1.231332
  ratio - 1    = 0.231332

n = 15  (odd,  |Phi_8|, N = 16)
  numerator    A147681(15) = 104899483845
  denominator  (15-1)! = 14! = 87178291200
  ratio        104899483845 / 87178291200 = 1.203275
  ratio - 1    = 0.203275

n = 17  (odd,  |Phi_9|, N = 18)
  numerator    A147681(17) = 24698838710457
  denominator  (17-1)! = 16! = 20922789888000
  ratio        24698838710457 / 20922789888000 = 1.180475
  ratio - 1    = 0.180475

n = 19  (odd,  |Phi_10|, N = 20)
  numerator    A147681(19) = 7444522779300351
  denominator  (19-1)! = 18! = 6402373705728000
  ratio        7444522779300351 / 6402373705728000 = 1.162775
  ratio - 1    = 0.162775

n = 21  (odd,  |Phi_11|, N = 22)
  numerator    A147681(21) = 2792467448952670671
  denominator  (21-1)! = 20! = 2432902008176640000
  ratio        2792467448952670671 / 2432902008176640000 = 1.147793
  ratio - 1    = 0.147793

n = 23  (odd,  |Phi_12|, N = 24)
  numerator    A147681(23) = 1276369340371154144337
  denominator  (23-1)! = 22! = 1124000727777607680000
  ratio        1276369340371154144337 / 1124000727777607680000 = 1.135559
  ratio - 1    = 0.135559

n = 25  (odd,  |Phi_13|, N = 26)
  numerator    A147681(25) = 698008560075731437652425
  denominator  (25-1)! = 24! = 620448401733239439360000
  ratio        698008560075731437652425 / 620448401733239439360000 = 1.125007
  ratio - 1    = 0.125007

n = 27  (odd,  |Phi_14|, N = 28)
  numerator    A147681(27) = 450111715263775132783135875
  denominator  (27-1)! = 26! = 403291461126605635584000000
  ratio        450111715263775132783135875 / 403291461126605635584000000 = 1.116095
  ratio - 1    = 0.116095
\end{verbatim}

}

\medskip\noindent%
The excess $|\Phi_p|/(2p-2)! - 1$ tends to~$0$ from above as~$O\bigl(\log(p)/p\bigr)$ as $p\to+\infty$.

\newpage
\section{The pairings in $\Phi_3$ by Moves 1 and 2}\label{AppPhi3}
\begin{table}[H]
\centering
\footnotesize
\setlength{\tabcolsep}{4pt}
\renewcommand{\arraystretch}{1.0}
\begin{tabular}{rcccc r c c r}
\toprule
$\#$ & $\sigma$ & $\btau$ & $(T_1,\dots,T_5)$ & $(-)^\sigma$ & contribution & pair & $\btau$-slots & pair sum \\
\midrule
\multicolumn{9}{l}{\textit{Move~1: pairs that differ by exchanging the first free pair}} \\
1 & $123456$ & $(2,1,0,-1,-2)$ & $(2,3,3,2,0)$ & $+$ & $9\,000$ & \multirow{2}{*}{F1} & \multirow{2}{*}{$2\leftrightarrow3$} & \multirow{2}{*}{$4\,500$} \\
2 & $123546$ & $(2,0,1,-1,-2)$ & $(2,2,3,2,0)$ & $-$ & $-4\,500$ &  &  &  \\
\addlinespace[1pt]
3 & $123564$ & $(0,2,1,-1,-2)$ & $(0,2,3,2,0)$ & $+$ & $450$ & \multirow{2}{*}{F2} & \multirow{2}{*}{$2\leftrightarrow3$} & \multirow{2}{*}{$270$} \\
4 & $123654$ & $(0,1,2,-1,-2)$ & $(0,1,3,2,0)$ & $-$ & $-180$ &  &  &  \\
\addlinespace[1pt]
5 & $123645$ & $(1,0,2,-1,-2)$ & $(1,1,3,2,0)$ & $+$ & $720$ & \multirow{2}{*}{F3} & \multirow{2}{*}{$2\leftrightarrow3$} & \multirow{2}{*}{$-2\,880$} \\
6 & $123465$ & $(1,2,0,-1,-2)$ & $(1,3,3,2,0)$ & $-$ & $-3\,600$ &  &  &  \\
\addlinespace[1pt]
7 & $124365$ & $(1,2,-1,0,-2)$ & $(1,3,2,2,0)$ & $+$ & $1\,800$ & \multirow{2}{*}{F4} & \multirow{2}{*}{$2\leftrightarrow3$} & \multirow{2}{*}{$1\,710$} \\
8 & $124635$ & $(1,-1,2,0,-2)$ & $(1,0,2,2,0)$ & $-$ & $-90$ &  &  &  \\
\addlinespace[1pt]
9 & $124536$ & $(2,-1,1,0,-2)$ & $(2,1,2,2,0)$ & $+$ & $900$ & \multirow{2}{*}{F5} & \multirow{2}{*}{$2\leftrightarrow3$} & \multirow{2}{*}{$-3\,600$} \\
10 & $124356$ & $(2,1,-1,0,-2)$ & $(2,3,2,2,0)$ & $-$ & $-4\,500$ &  &  &  \\
\addlinespace[1pt]
11 & $125346$ & $(2,0,-1,1,-2)$ & $(2,2,1,2,0)$ & $+$ & $900$ & \multirow{2}{*}{F6} & \multirow{2}{*}{$2\leftrightarrow3$} & \multirow{2}{*}{$540$} \\
12 & $125436$ & $(2,-1,0,1,-2)$ & $(2,1,1,2,0)$ & $-$ & $-360$ &  &  &  \\
\addlinespace[1pt]
13 & $126435$ & $(1,-1,0,2,-2)$ & $(1,0,0,2,0)$ & $+$ & $9$ & \multirow{2}{*}{F7} & \multirow{2}{*}{$2\leftrightarrow3$} & \multirow{2}{*}{$-27$} \\
14 & $126345$ & $(1,0,-1,2,-2)$ & $(1,1,0,2,0)$ & $-$ & $-36$ &  &  &  \\
\addlinespace[1pt]
15 & $132465$ & $(1,2,0,-2,-1)$ & $(1,3,3,1,0)$ & $+$ & $1\,440$ & \multirow{2}{*}{F8} & \multirow{2}{*}{$2\leftrightarrow3$} & \multirow{2}{*}{$1\,152$} \\
16 & $132645$ & $(1,0,2,-2,-1)$ & $(1,1,3,1,0)$ & $-$ & $-288$ &  &  &  \\
\addlinespace[1pt]
17 & $132546$ & $(2,0,1,-2,-1)$ & $(2,2,3,1,0)$ & $+$ & $1\,800$ & \multirow{2}{*}{F9} & \multirow{2}{*}{$2\leftrightarrow3$} & \multirow{2}{*}{$-1\,800$} \\
18 & $132456$ & $(2,1,0,-2,-1)$ & $(2,3,3,1,0)$ & $-$ & $-3\,600$ &  &  &  \\
\addlinespace[1pt]
19 & $132654$ & $(0,1,2,-2,-1)$ & $(0,1,3,1,0)$ & $+$ & $72$ & \multirow{2}{*}{F10} & \multirow{2}{*}{$2\leftrightarrow3$} & \multirow{2}{*}{$-108$} \\
20 & $132564$ & $(0,2,1,-2,-1)$ & $(0,2,3,1,0)$ & $-$ & $-180$ &  &  &  \\
\addlinespace[1pt]
21 & $134256$ & $(2,1,-2,0,-1)$ & $(2,3,1,1,0)$ & $+$ & $720$ & \multirow{2}{*}{F11} & \multirow{2}{*}{$2\leftrightarrow3$} & \multirow{2}{*}{$684$} \\
22 & $134526$ & $(2,-2,1,0,-1)$ & $(2,0,1,1,0)$ & $-$ & $-36$ &  &  &  \\
\addlinespace[1pt]
23 & $135426$ & $(2,-2,0,1,-1)$ & $(2,0,0,1,0)$ & $+$ & $9$ & \multirow{2}{*}{F12} & \multirow{2}{*}{$2\leftrightarrow3$} & \multirow{2}{*}{$-81$} \\
24 & $135246$ & $(2,0,-2,1,-1)$ & $(2,2,0,1,0)$ & $-$ & $-90$ &  &  &  \\
\addlinespace[1pt]
25 & $142356$ & $(2,1,-1,-2,0)$ & $(2,3,2,0,0)$ & $+$ & $450$ & \multirow{2}{*}{F13} & \multirow{2}{*}{$2\leftrightarrow3$} & \multirow{2}{*}{$360$} \\
26 & $142536$ & $(2,-1,1,-2,0)$ & $(2,1,2,0,0)$ & $-$ & $-90$ &  &  &  \\
\addlinespace[1pt]
27 & $142635$ & $(1,-1,2,-2,0)$ & $(1,0,2,0,0)$ & $+$ & $9$ & \multirow{2}{*}{F14} & \multirow{2}{*}{$2\leftrightarrow3$} & \multirow{2}{*}{$-171$} \\
28 & $142365$ & $(1,2,-1,-2,0)$ & $(1,3,2,0,0)$ & $-$ & $-180$ &  &  &  \\
\addlinespace[1pt]
29 & $143526$ & $(2,-2,1,-1,0)$ & $(2,0,1,0,0)$ & $+$ & $9$ & \multirow{2}{*}{F15} & \multirow{2}{*}{$2\leftrightarrow3$} & \multirow{2}{*}{$-171$} \\
30 & $143256$ & $(2,1,-2,-1,0)$ & $(2,3,1,0,0)$ & $-$ & $-180$ &  &  &  \\
\addlinespace[1pt]
31 & $143265$ & $(1,2,-2,-1,0)$ & $(1,3,1,0,0)$ & $+$ & $72$ & \multirow{2}{*}{F16} & \multirow{2}{*}{$4\leftrightarrow5$} & \multirow{2}{*}{$-216$} \\
32 & $134265$ & $(1,2,-2,0,-1)$ & $(1,3,1,1,0)$ & $-$ & $-288$ &  &  &  \\
\addlinespace[1pt]
\cmidrule(lr){1-9}
\multicolumn{8}{l}{\textit{Move~1 subtotal (free)}} & $162$ \\
\midrule
\multicolumn{9}{l}{\textit{Move~2: rigid orderings, two debts exchanged}} \\
33 & $135264$ & $(0,2,-2,1,-1)$ & $(0,2,0,1,0)$ & $+$ & $9$ & \multirow{2}{*}{R1} & \multirow{2}{*}{$3\leftrightarrow5$} & \multirow{2}{*}{$-81$} \\
34 & $125364$ & $(0,2,-1,1,-2)$ & $(0,2,1,2,0)$ & $-$ & $-90$ &  &  &  \\
\addlinespace[1pt]
\cmidrule(lr){1-9}
\multicolumn{8}{l}{\textit{Move~2 subtotal (rigid)}} & $-81$ \\
\midrule
\multicolumn{9}{l}{\textit{Staircase: no partner}} \\
35 & $126354$ & $(0,1,-1,2,-2)$ & $(0,1,0,2,0)$ & $+$ & $9$ & S & & $9$ \\
\midrule
\multicolumn{8}{l}{$\sum$ of all contributions $=\consta(3)$} & $90$ \\
\multicolumn{8}{l}{$\sum$ of the positive contributions} & $18\,378$ \\
\multicolumn{8}{l}{$\sum$ of the negative contributions} & $-18\,288$ \\
\bottomrule
\end{tabular}
\caption{All $35$ elements of $\Phi_3$ (out of the $6!=720$ permutations $\sigma\in\BBS_6$), grouped as Moves~1 and~2 pair them: $16$ pairs F1--F16 by Move~1, one pair R1 of rigid orderings by Move~2, and the staircase permutation $\sigma^*$ (S). Within each pair the parity\/-\/even permutation is listed first; the column `$\btau$-slots' shows which two slots of $\btau$ are exchanged, and `pair sum' adds the two contributions to~$\consta(3)$.}
\label{tab:phi3}
\end{table}

\begin{landscape}
\section{The two moves on $\Phi_p$ for $p\leqslant 14$}\label{AppMoves}
\begin{table}[htb]
\centering\scriptsize\setlength{\tabcolsep}{2.6pt}
\caption{For $p\leqslant 14$ here, $\Phi_p$ is the set of late\/-\/growing permutations of $2p-1$ objects $2$,\ $\ldots$,\ $N=2p$.
We report the counts of permutations paired off by Move~1 (in such, there was at least one free pair), then by Move~2 (rigid), except the staircase permutation $\sigma^*$ that always survives till the end, marking the dominant subset in~$\Phi_p$:
of either parity\/-\/even ($e_p$ many) or parity\/-\/odd (their number is denoted by~$o_p$).
Approximate values of $(2p)!$ and $(2p-1)!$ are given if $p\geqslant7$.}
\label{tab:moves-full}
\renewcommand{\arraystretch}{1.25}
\begin{tabular}{r r r r r r r c c r}
\toprule
$p$ & $N!$ & $(N-1)!$ & $|\Phi_p|$ & \shortstack{paired off\\by Move~1} & \shortstack{left after\\Move~1 (rigid)} & \shortstack{paired off\\by Move~2} & \shortstack{left after\\Move~2} & $e_p-o_p$ & \shortstack{rigid, in\\$\%$ of $|\Phi_p|$} \\
\midrule
$1$ & $2$ & $1$ & $1$ & $0$ & $1$ & $0$ & $1$ & $+1$ & $100\,\%$ \\
$2$ & $24$ & $6$ & $3$ & $2$ & $1$ & $0$ & $1$ & $-1$ & $33.33\,\%$ \\
$3$ & $720$ & $120$ & $35$ & $32$ & $3$ & $2$ & $1$ & $+1$ & $8.571\,\%$ \\
$4$ & $40\,320$ & $5\,040$ & $1\,001$ & $986$ & $15$ & $14$ & $1$ & $-1$ & $1.499\,\%$ \\
$5$ & $3\,628\,800$ & $362\,880$ & $53\,109$ & $52\,988$ & $121$ & $120$ & $1$ & $+1$ & $0.2278\,\%$ \\
$6$ & $479\,001\,600$ & $39\,916\,800$ & $4\,605\,271$ & $4\,603\,942$ & $1\,329$ & $1\,328$ & $1$ & $-1$ & $0.02886\,\%$ \\
$7$ & $8.72\cdot 10^{10}$ & $6.23\cdot 10^{9}$ & $589\,809\,987$ & $589\,789\,720$ & $20\,267$ & $20\,266$ & $1$ & $+1$ & $3.44\cdot 10^{-3}\,\%$ \\
$8$ & $2.09\cdot 10^{13}$ & $1.31\cdot 10^{12}$ & $104\,899\,483\,845$ & $104\,899\,090\,278$ & $393\,567$ & $393\,566$ & $1$ & $-1$ & $3.75\cdot 10^{-4}\,\%$ \\
$9$ & $6.40\cdot 10^{15}$ & $3.56\cdot 10^{14}$ & $24\,698\,838\,710\,457$ & $24\,698\,828\,926\,664$ & $9\,783\,793$ & $9\,783\,792$ & $1$ & $+1$ & $3.96\cdot 10^{-5}\,\%$ \\
$10$ & $2.43\cdot 10^{18}$ & $1.22\cdot 10^{17}$ & $7\,444\,522\,779\,300\,351$ & $7\,444\,522\,482\,068\,318$ & $297\,232\,033$ & $297\,232\,032$ & $1$ & $-1$ & $3.99\cdot 10^{-6}\,\%$ \\
$11$ & $1.12\cdot 10^{21}$ & $5.11\cdot 10^{19}$ & $2\,792\,467\,448\,952\,670\,671$ & $2\,792\,467\,437\,906\,312\,444$ & $11\,046\,358\,227$ & $11\,046\,358\,226$ & $1$ & $+1$ & $3.96\cdot 10^{-7}\,\%$ \\
$12$ & $6.20\cdot 10^{23}$ & $2.59\cdot 10^{22}$ & $1\,276\,369\,340\,371\,154\,144\,337$ & $1\,276\,369\,339\,884\,584\,175\,906$ & $486\,569\,968\,431$ & $486\,569\,968\,430$ & $1$ & $-1$ & $3.81\cdot 10^{-8}\,\%$ \\
$13$ & $4.03\cdot 10^{26}$ & $1.55\cdot 10^{25}$ & $698\,008\,560\,075\,731\,437\,652\,425$ & $698\,008\,560\,050\,375\,355\,946\,336$ & $25\,356\,081\,706\,089$ & $25\,356\,081\,706\,088$ & $1$ & $+1$ & $3.63\cdot 10^{-9}\,\%$ \\
$14$ & $3.05\cdot 10^{29}$ & $1.09\cdot 10^{28}$ & $450\,111\,715\,263\,775\,132\,783\,135\,875$ & $450\,111\,715\,262\,245\,154\,492\,655\,218$ & $1\,529\,978\,290\,480\,657$ & $1\,529\,978\,290\,480\,656$ & $1$ & $-1$ & $3.40\cdot 10^{-10}\,\%$ \\
\bottomrule
\end{tabular}
\end{table}
\end{landscape}
%

\section{The monomial degree walks for $35$ 
permutations in~$\Phi_3$}\label{App35walks}
\begin{figure}[H]\centering
\renewcommand{\pinK}{0}
\begin{tikzpicture}
\walkpanel{0}{0}{123456}{2,3,3,2,0}{+9\,000}{up}{1}
\walkpanel{1}{0}{123546}{2,2,3,2,0}{-4\,500}{dn}{1}
\walkpanel{2}{0}{123564}{0,2,3,2,0}{+450}{up}{1}
\walkpanel{3}{0}{123654}{0,1,3,2,0}{-180}{dn}{1}
\walkpanel{4}{0}{123645}{1,1,3,2,0}{+720}{up}{1}
\walkpanel{0}{1}{123465}{1,3,3,2,0}{-3\,600}{dn}{1}
\walkpanel{1}{1}{124365}{1,3,2,2,0}{+1\,800}{up}{1}
\walkpanel{2}{1}{124635}{1,0,2,2,0}{-90}{dn}{1}
\walkpanel{3}{1}{124536}{2,1,2,2,0}{+900}{up}{1}
\walkpanel{4}{1}{124356}{2,3,2,2,0}{-4\,500}{dn}{1}
\walkpanel{0}{2}{125346}{2,2,1,2,0}{+900}{up}{1}
\walkpanel{1}{2}{125436}{2,1,1,2,0}{-360}{dn}{1}
\walkpanel{2}{2}{126435}{1,0,0,2,0}{+9}{up}{1}
\walkpanel{3}{2}{126345}{1,1,0,2,0}{-36}{dn}{1}
\walkpanel{4}{2}{132465}{1,3,3,1,0}{+1\,440}{up}{1}
\walkpanel{0}{3}{132645}{1,1,3,1,0}{-288}{dn}{1}
\walkpanel{1}{3}{132546}{2,2,3,1,0}{+1\,800}{up}{1}
\walkpanel{2}{3}{132456}{2,3,3,1,0}{-3\,600}{dn}{1}
\walkpanel{3}{3}{132654}{0,1,3,1,0}{+72}{up}{1}
\walkpanel{4}{3}{132564}{0,2,3,1,0}{-180}{dn}{1}
\walkpanel{0}{4}{134256}{2,3,1,1,0}{+720}{up}{1}
\walkpanel{1}{4}{134526}{2,0,1,1,0}{-36}{dn}{1}
\walkpanel{2}{4}{135426}{2,0,0,1,0}{+9}{up}{1}
\walkpanel{3}{4}{135246}{2,2,0,1,0}{-90}{dn}{1}
\walkpanel{4}{4}{142356}{2,3,2,0,0}{+450}{up}{1}
\walkpanel{0}{5}{142536}{2,1,2,0,0}{-90}{dn}{1}
\walkpanel{1}{5}{142635}{1,0,2,0,0}{+9}{up}{1}
\walkpanel{2}{5}{142365}{1,3,2,0,0}{-180}{dn}{1}
\walkpanel{3}{5}{143526}{2,0,1,0,0}{+9}{up}{1}
\walkpanel{4}{5}{143256}{2,3,1,0,0}{-180}{dn}{1}
\walkpanel{0}{6}{143265}{1,3,1,0,0}{+72}{up}{1}
\walkpanel{1}{6}{134265}{1,3,1,1,0}{-288}{dn}{1}
\walkpanel{2}{6}{135264}{0,2,0,1,0}{+9}{up}{1}
\walkpanel{3}{6}{125364}{0,2,1,2,0}{-90}{dn}{1}
\walkpanel{4}{6}{126354}{0,1,0,2,0}{+9}{up}{1}
\end{tikzpicture}
\caption{The $35$ late-growing permutations $\sigma\in\Phi_3\subset\mathbb{S}_6$, in the order of Table~\ref{tab:phi3}. 
Each panel shows the walk $k\mapsto T_k$, $0\leqslant k\leqslant 5$ (horizontal axis $k$, vertical axis $T_k$), and the contribution $(-)^\sigma\,c(\sigma,3)$ of that walk to the sum~$\consta(3)$.}\label{Fig35walks}
\end{figure}


\begin{thebibliography}{17}\small

\bibitem{DzhumaWron}
\by{Dzhumadil'daev A.\,S.} (2005) $n$-Lie structures generated by Wronskians,
\jour{Siberian Math.~J.} \vol{46}:4, 601--612.\ 
(\jour{Preprint} \texttt{arXiv:math.RA/0202043})

\bibitem{DzhumaAssoc}
\by{Dzhumadil'daev A.\,S.} (1988) Integral and mod $p$-cohomologies of the Lie algebra $W_1$, \jour{Funct.\ Anal.\ Appl.} \vol{22}:3, 226--228.

\bibitem{HanlonWachs}
\by{Hanlon P., Wachs M.\,L.} (1995) On Lie $k$-algebras, \jour{Adv.\ 
Math.} \vol{113}, 206--236.

\bibitem{ForKac}
\by{Kiselev A. V.} (2005) 
On associative Schlessinger\/--\/Stasheff algebras and Wronskian determinants,
\jour{Fundam.\ Prikl.\ Mat.} \vol{11}:1, 159--180
(
English transl.:
\jour{J.~Math.\ Sci.} (2007) \vol{141}:1, 1016--1030).\ %
(\jour{Preprint} \texttt{arXiv:math.RA/0410185})

\bibitem{PRG25}
\by{Kiselev A.\,V.} (2025)
Wronskians as $N$-ary brackets in finite\/-\/dimensional analogues of~$\mathfrak{sl}(2)$,
\jour{J.~Phys.\textup{:}\ Conf.~Ser.} \vol{3152}, 
Paper 012044, 1--8.\ 
(\jour{Preprint} \texttt{arXiv:2510.02145} [math.RA])

\bibitem{Yer25}
\by{Kiselev A.\,V.} (2025) Jacobi identities for Wronskian determinants
over multidimension,
\jour{Bulgarian J.~Phys.} \vol{52}-s1  Quantum Theory \& Symmetries
, 102--107.\ %
(\jour{Preprint} \texttt{arXiv:2511.} \texttt{03848} [math.RA])

\bibitem{LadaStasheff}
\by{Stasheff J., Lada\ T.} (1993)
Introduction to SH~Lie algebras for physicists,
\jour{Internat.\ J.~Theoret.\ Phys.} \vol{32}:{7}, {1087--1103}.\ %
(\textit{Preprint} 
{\texttt{arXiv:hep-th/9209099}})

\bibitem{cpWron}
\by{Shah K.\,C., Kiselev A.\,V.} (2026)
The alternating compositions of weighted differential operators yield the weights' Wronskian with which constant\,? 
\jour{Preprint} \texttt{arXiv:2605.11137} [math.CO], 22~p.

\bibitem{Method}
\by{Shah K.\,C., Kiselev A.\,V.} (2026)
New integer sequence OEIS\:A392\,714 counts Wronskians: fast evaluation via late\/-\/growing permutations,
\jour{Preprint} \texttt{arXiv:}\texttt{2610.08636} [math.NT], 30~p.

\bibitem{Valladolid}
\by{Shah K.\,C., Kiselev A.\,V.} (2026)
New integer sequence OEIS\:A392\,714 counts Wronskians, II.
Computing within the permutation group algebra,
Proc.\ 36th Int.\ colloquium on group theoretic methods in Physics: \textsc{Group36} (13--17 July 2026, Valladolid, Spain),
\jour{Preprint} \texttt{arXiv:}\texttt{2611.\phantom{xxxxx}} [math.CO], 16~p.

\bibitem{StasheffAinftyLinfty}
\by{Stasheff J.} (2019)
L-infinity and A-infinity structures,
\jour{Higher Structures} \vol{3}:{1}, {292--326}.

\bibitem{oeisA147681}
The on-line encyclopedia of integer sequences: Sequence a147681, 
https://oeis.org/A147681, OEIS Foundation Inc., 2026.

\bibitem{oeisA392714}
The on-line encyclopedia of integer sequences: Sequence a392714, 
https://oeis.org/A392714, OEIS Foundation Inc., 2026.

\end{thebibliography}
\end{document}